\numberwithin{equation}{section}
\newtheorem{theorem}{Theorem}[section]
\newtheorem{definition}{Definition}[section]
\newtheorem{lemma}{Lemma}[section]
\newtheorem{remark}{Remark}[section]
\newtheorem{Assumption}{Assumption}[section]
\begin{document}
\vspace{1.3cm}
\title
{Two-step inertial Bregman alternating structure-adapted proximal gradient descent algorithm for nonconvex and nonsmooth problems{\thanks{Supported by Scientific Research Project of Tianjin Municipal Education Commission (2022ZD007).}}}
\author{ {\sc Chenzheng Guo{\thanks{Email: g13526199036@163.com}},
Jing Zhao{\thanks{Corresponding author. Email: zhaojing200103@163.com}}}\\
\small College of Science, Civil Aviation University of China, Tianjin 300300, China\\
}
\date{}
\date{}
\maketitle{}
{\bf Abstract.}
In this paper, we propose accelerated alternating structure-adapted proximal gradient descent method for a class of nonconvex and nonsmooth nonseparable problems. The proposed  algorithm is a monotone method which combines two-step inertial extrapolation and Bregman distance. Under some assumptions, we prove that every cluster point of the sequence generated by the proposed algorithm is a critical point. Furthermore, with the help of Kurdyka--{\L}ojasiewicz property, we establish the convergence of the whole sequence generated by proposed algorithm.
In order to make the algorithm more effective and flexible, we also use some strategies to update the extrapolation parameter and solve the problems with unknown Lipschitz constant. Moreover, we report some preliminary numerical results on involving nonconvex quadratic programming and sparse logistic regression to show the feasibility and effectiveness of the proposed methods.
\vskip 0.4 true cm
\noindent {\bf Key words}: Accelerated methods; Nonconvex and nonsmooth nonseparable optimization; Extrapolation; Bregman distance; Kurdyka--{\L}ojasiewicz property.
\section{Introduction}
\hspace*{\parindent} In this paper, we will consider to solve the following nonconvex and nonsmooth nonseparable optimization problem:
\begin{equation}
\label{MP}
\min_{x\in \mathbb R^{n} ,y\in\mathbb R^{m}}   L(x,y)=f(x)+Q(x,y)+g(y),
\end{equation}
where $f:\mathbb R^{n}\to \mathbb R$, $g:\mathbb R^{m}\to \mathbb R$ are  continuously differentiable, $Q:\mathbb R^{n}\times \mathbb R^{m}\to \mathbb R\cup \left \{ \infty  \right \} $ is a proper, lower semicontinuous function. 
Note that here and throughout the paper, no convexity is assumed on the objective function. Problem \eqref{MP} is used in many application scenarios, such as signal recovery \cite{NNZC,GZZF,B}, nonnegative matrix facorization \cite{PT,LS}, quadratic fractional programming \cite{BCV,XX}, compressed sensing \cite{ABSS,DC},  sparse logistic regression \cite{XX} and so on.
\par The natural method to solve problem \eqref{MP} is the alternating minimization (AM) method (also called block coordinate descent (BCD) method), which, from a given initial point $\left ( x_{0},y_{0}   \right ) \in \mathbb{R}^n  \times \mathbb{R}^m $, generates the iterative sequence $\left \{ \left ( x_{k},y_{k}   \right )  \right \} $ via the scheme:
\begin{equation}
\label{TTB}
\begin{cases}
x_{k+1}\in \arg\min_{ x\in \mathbb{R}^n}\{L(x,y_k)\},\\
y_{k+1}\in \arg\min_{y\in \mathbb{R}^m}\{L(x_{k+1},y)\}.\\
\end{cases}
\end{equation}
If $L(x,y)$ is convex and continuously differentiable, and it is strict convex of one argument while the other is fixed, then the sequence converges to a critical point \cite{BN,BTL}.
\par To relax the requirements of AM method and remove the strict convexity assumption, Auslender \cite{A} introduced proximal terms to \eqref{TTB} for convex function $L$:
\begin{equation}
\label{PAMA}
\begin{cases}
x_{k+1}\in \arg\min_{ x\in \mathbb{R}^n}\{L(x,y_k)+\frac{1}{2\lambda_k}\|x-x_k\|^2_2\},\\
y_{k+1}\in \arg\min_{y\in \mathbb{R}^m}\{L(x_{k+1},y)+\frac{1}{2\mu_k}\|y-y_k\|^2_2\},\\
\end{cases}
\end{equation}
where $\{\lambda_k\}_{k\in\mathbb{N}}$ and $\{\mu_k\}_{k\in\mathbb{N}}$ are positive sequences. The above proximal point method, which is called proximal alternating minimization (PAM) algorithm, was further extended to nonconvex nonsmooth functions. Such as, in \cite{ABR}, {Attouch} et al. applied \eqref{PAMA} to solve nonconvex problem \eqref{MP} and proved the sequence generated by the proximal alternating minimization algorithm \eqref{PAMA} converges to a critical point. More convergence analysis of the proximal point method can be found in \cite{ARS,XY}.
\par Because the proximal alternating minimization algorithm requires an exact solution at each iteration step, the subproblems are very expensive if the minimizers of subproblems are not given in a closed form. The linearization technique is one of the effective methods to overcome the absence of an analytic solution to the subproblem. {Bolte} et al. \cite{BST} proposed the following proximal alternating linearized minimization (PALM) algorithm under the condition that the coupling term $Q(x,y)$ is continuously differentiable:
\begin{equation}
\label{PALM}
\begin{cases}
x_{k+1}\in \arg\min_{ x\in \mathbb{R}^n}\{f(x)+\langle x,\nabla_xQ(x_k,y_k)\rangle+\frac{1}{2\lambda_k}\|x-x_k\|^2_2\},\\
y_{k+1}\in \arg\min_{y\in \mathbb{R}^m}\{g(y)+\langle y,\nabla_yQ(x_{k+1},y_k)\rangle+\frac{1}{2\mu_k}\|y-y_k\|^2_2\}.\\
\end{cases}
\end{equation}
The step size $\lambda_k$ and $\mu_k$ are limited to 
$$\lambda_k\in (0,({\rm Lip}(\bigtriangledown _{x}Q(\cdot ,y_k) ))^{-1} ), \mu_k\in (0,({\rm Lip}(\bigtriangledown _{y}Q(x_{k+1},\cdot ) ))^{-1} ),$$
where ``Lip” denotes the Lipschitz constant of the function in parentheses. In this way, the solution of some subproblems may be expressed by a closed-form or can be easily calculated. 
The global convergence result was established if $L(x,y)$ satisfied the Kurdyka--{\L}ojasiewicz property. 
\par When $f$ and $g$ are continuously differentiable, a natural idea is to linearize $f$ and $g$. {Nikolova} et al. \cite{NT} proposed the corresponding algorithm, called alternating structure-adapted proximal gradient descent (ASAP) algorithm with the following scheme:
\begin{equation}
\label{ASAP}
\begin{cases}
x_{k+1}\in \arg\underset{x\in \mathbb{R}^n}{\min}\{Q(x,y_k)+\langle x,\nabla f(x_k)\rangle+\frac{1}{2\sigma}\|x-x_k\|^2_2\},\\
y_{k+1}\in \arg\underset{y\in \mathbb{R}^m}{\min}\{Q(x_{k+1},y)+\langle y,\nabla g(y_k)\rangle+\frac{1}{2\tau}\|y-y_k\|^2_2\},\\
\end{cases}
\end{equation}
where $\sigma \in (0,({\rm Lip}(\bigtriangledown f))^{-1} ), \tau \in (0,({\rm Lip}(\bigtriangledown g))^{-1} )$. With the help of Kurdyka--{\L}ojasiewicz property they establish the convergence of the whole sequence generated by \eqref{ASAP}.
\par The inertial extrapolation technique has been widely used to accelerate the iterative algorithms for convex  and nonconvex optimizations, since the cost of each iteration stays basically unchanged \cite{OCB,BC}. The inertial scheme, starting from the so-called heavy ball method of {Polyak} \cite{PB}, was recently proved to be very efficient in accelerating numerical methods, especially the first-order methods. Alvarez et al. \cite{AA} applied the inertial strategy to the proximal point
method and proved that it could improve the rate of convergence. The main  feature of the idea is that the new iteration use the previous two or more iterations. 
\par Based on \eqref{PALM}, {Pock and Sabach \cite{PS}} proposed the following inertial proximal alternating linearized minimization (iPALM) algorithm:  
\begin{equation}
\label{iPALM}
\begin{cases}
u_{1k}=x_k+\alpha _{1k}(x_k-x_{k-1}), v_{1k}=x_k+\beta _{1k}(x_k-x_{k-1}),\\
x_{k+1}\in \arg\min_{ x\in \mathbb{R}^n}\{f(x)+\langle x,\nabla_xQ(v_{1k},y_k)\rangle+\frac{1}{2\lambda_k}\|x-u_{1k}\|^2_2\},\\
u_{2k}=y_k+\alpha _{2k}(y_k-y_{k-1}), v_{2k}=y_k+\beta _{2k}(y_k-y_{k-1}),\\
y_{k+1}\in \arg\min_{y\in \mathbb{R}^m}\{g(y)+\langle y,\nabla_yQ(x_{k+1},v_{2k})\rangle+\frac{1}{2\mu_k}\|y-u_{2k}\|^2_2\},\\
\end{cases}
\end{equation}
where $\alpha _{1k},\alpha _{2k},\beta _{1k},\beta _{2k}\in \left [ 0,1 \right ] $. They proved that the generated sequence globally converges to critical point of the objective function under the condition of the Kurdyka--{\L}ojasiewicz property. When $\alpha _{1k}\equiv \alpha _{2k}\equiv \beta _{1k}\equiv \beta _{2k}\equiv 0$, iPALM reduces to PALM. Then {Cai} et al. \cite{GCH} presented a Gauss--Seidel type inertial proximal alternating linearized minimization (GiPALM) algorithm for solving problem \eqref{MP}:
\begin{equation}
\label{GiPALM}
\begin{cases}
x_{k+1}\in \arg\min_{ x\in \mathbb{R}^n}\{f(x)+\langle x,\nabla_xQ(\tilde{x}_{k},\tilde{y}_k)\rangle+\frac{1}{2\lambda_k}\|x-\tilde{x}_{k}\|^2_2\},\\
\tilde{x}_{k+1}=x_{k+1}+\alpha (x_{k+1}-\tilde{x}_{k}), \alpha \in [0,1),\\
y_{k+1}\in \arg\min_{y\in \mathbb{R}^m}\{g(y)+\langle y,\nabla_yQ(\tilde{x}_{k+1},\tilde{y}_{k})\rangle+\frac{1}{2\mu_k}\|y-\tilde{y}_{k}\|^2_2\},\\
\tilde{y}_{k+1}=y_{k+1}+\beta(y_{k+1}-\tilde{y}_{k}), \beta \in [0,1).
\end{cases}
\end{equation}
\par By using inertial extrapolation technique, {Xu} et al. \cite{XX} proposed the following accelerated alternating structure-adapted proximal gradient descent (aASAP) algorithm:
\begin{equation}
\label{aASAP}
\begin{cases}
x_{k+1}\in \arg\min_{ x\in \mathbb{R}^n}\{Q(x,\hat{y} _k)+\langle \nabla f(\hat{x}_k),x\rangle+\frac{1}{2\sigma}\|x-\hat{x} _k\|^2_2\},\\
y_{k+1}\in \arg\min_{y\in \mathbb{R}^m}\{Q(x_{k+1},y)+\langle \nabla g(\hat{y} _k),y\rangle+\frac{1}{2\tau}\|y-\hat{y}_k\|^2_2\},\\
u_{k+1}=x_{k+1}+\beta_{k}(x_{k+1}-x_{k}), v_{k+1}=y_{k+1}+\beta _{k}(y_{k+1}-y_{k}),\\
{\rm if}  \ L(u_{k+1},v_{k+1})\le L(x_{k+1},y_{k+1}), \ {\rm then} \ \hat{x} _{k+1}=u_{k+1}, \hat{y} _{k+1}=v_{k+1},\\
{\rm else}  \  \hat{x} _{k+1}=x_{k+1}, \hat{y} _{k+1}=y_{k+1}.
\end{cases}
\end{equation}
Compared with the traditional extrapolation algorithm, the main difference is to ensure that the algorithm is a monotone method in terms of objective function value, while general extrapolation algorithms may be nonmonotonic.
\par Bregman distance is a useful substitute for a distance, obtained from the various choices of functions. The applications of Bregman distance instead of the norm gives us alternative ways for more flexibility in the selection of regularization. Choosing appropriate Bregeman distances can obtain colsed form of solution for solving some subproblem. Bregman distance regularization is also an effective way to improve the numerical results of the algorithm. In {\cite{ZQ}}, the authors constructed the following two-step inertial Bregman alternating minimization algorithm using the information of the previous three iterates: 
\begin{equation}
\label{TiBAM}
\begin{cases}
x_{k+1}\in \arg\min_{ x\in \mathbb{R}^n}\{L(x,y_k)+D_{\phi_1}(x,x_k)+\alpha_{1k} \langle x,x_{k-1}-x_k\rangle+\alpha_{2k} \langle x,x_{k-2}-x_{k-1}\rangle\},\\
y_{k+1}\in \arg\min_{y\in \mathbb{R}^m}\{L(x_{k+1},y)+D_{\phi_2}(y,y_k)+\beta_{1k} \langle y,y_{k-1}-y_k\rangle+\beta_{2k} \langle y,y_{k-2}-y_{k-1}\},
\end{cases}
\end{equation}
where $D_{\phi_i}(i=1,2)$ denotes the Bregman distance with respect to  $\phi_i(i=1,2)$, respectively. The convergence is obtained provided an appropriate regularization of the objective function satisfies the Kurdyka--{\L}ojasiewicz inequality. Based on alternating minimization algorithm, {Zhao} et al. \cite{ZA} proposed  the following inertial alternating minimization with Bregman distance (BIAM) algorithm:
\begin{equation}
\label{BIAM}
\begin{cases}
x_{k+1}\in \arg\min_{ x\in \mathbb{R}^n}\{f(x)+Q(x,\hat{y} _k)+\lambda _{k} D_{\phi_1}(x,\hat{x} _k)\},\\
\hat{x}_{k+1}=x_{k+1}+\alpha(x_{k+1}-\hat x_{k}), \alpha \in [0,1),\\
y_{k+1}\in \arg\min_{ y\in \mathbb{R}^m}\{g(y)+Q(\hat{x}_{k+1},y)+\mu _{k} D_{\phi_2}(y,\hat{y} _k)\},\\
\hat{y}_{k+1}=y_{k+1}+\beta(y_{k+1}-\hat y_{k}), \beta \in [0,1).
\end{cases}
\end{equation}
Suppose that the benefit function satisfies the Kurdyka--{\L}ojasiewicz property and the parameters are selected appropriately, they proved the convergence of BIAM algorithm.
\par In this paper, based on the alternating structure-adapted proximal gradient methods, we combine inertial extrapolation technique and Bregman distance to construct two-step inertial Bregman alternating structure-adapted proximal gradient descent algorithm. And in order to make the proposed algorithm more effective and flexible, we also use some strategies to update the extrapolation parameter and solve the problems with unknown Lipschitz constant. Under some assumptions about the penalty parameter and objective function, the convergence of the proposed algorithm is obtained based on the Kurdyka--{\L}ojasiewicz property.  Moreover, we report some preliminary numerical results on involving quadratic programming and logistic regression problem to show the  feasibility and effectiveness of the proposed method. 
\par The article is organized as follows. In Section \ref{sect2}, we recall some concepts and important lemmas which will be used in the proof of main results. In Section \ref{sect3}, we present the Two-step inertial Bregman alternating structure-adapted proximal gradient algorithm and show its convergence. Finally, in Section \ref{sect4}, the preliminary numerical examples on nonconvex quadratic programming and sparse logistic regression problem are provided to illustrate the behavior of the proposed algorithm.
\section{Preliminaries}\label{sect2}
\hspace*{\parindent} Consider the Euclidean vector space $\mathbb{R}^d$ of dimension $d\geq 1$, the standard inner product and the induced norm on $\mathbb{R}^d$ are denoted by $\langle\cdot,\cdot\rangle$ and $\|\cdot\|_2$, respectively. We use $\omega(x_k)=\{x:\exists x_{k_j} \rightarrow x\}$ to stand for the limit set of $\{x_k\}_{k\in \mathbb{N}}$.
\par 
The {\it domain} of $f$ are defined by dom$f:=\{x\in \mathbb{R}^d: f(x)<+\infty\}$. We say that $f$ is {\it proper} if dom$f\neq \emptyset$, and $f$ is called {\it lower semicontinuous} at $x$ if $f(x)\le \lim\inf_{k\to \infty }f(x_k)$ for every sequence $\left \{ x_k \right \} $ converging to $x$. If $f$ is lower semicontinuous in its domain, we say $f$ is a lower semicontinuous function. If dom$f$ is closed and $f$ is lower semicontinuous over dom$f$ , then $f$ is a closed function. Further we recall some generalized subdifferential notions and the basic properties which are needed in this paper.
\subsection{Subdifferentials}
\begin{definition}
\rm
\label{Def211}(Subdifferentials)
Let $f: \mathbb{R}^d \rightarrow(-\infty,+\infty]$ be a proper and lower semicontinuous function.

(i)For $x\in$ {\rm dom}$f $, the Fr\'{e}chet subdifferential of $f$ at $x$, written  $\hat{\partial}f(x)$, is the set of vectors $v\in \mathbb{R}^d$ which satisfy
$$\liminf_{y\rightarrow x}\frac{1}{\|x-y\|_2}[f(y)-f(x)-\langle v,y-x\rangle]\geq 0.$$

(ii)If $x\not\in$ {\rm dom}$f$, then $\hat{\partial}f(x)=\emptyset$.  The limiting-subdifferential \cite{MV}, or simply the subdifferential for short, of $f$ at $x\in$ {\rm dom}$f$, written $\partial f(x)$, is defined as follows:
$$\partial f(x):=\{v\in \mathbb{R}^d: \exists x_k\rightarrow x, f(x_k)\rightarrow f(x), v_k\in \hat{\partial}f(x_k), v_k\rightarrow v\}.$$
\end{definition}

\begin{remark}
\label{rem21}
\rm
(a) The above definition implies that $\hat{\partial}f(x)\subseteq \partial f(x)$ for each $x\in \mathbb{R}^d$, where the first set is convex and closed while the second one is closed (see\cite{RWA}).

(b) (Closedness of $\partial f$)  Let $\{x_k\}_{k\in \mathbb{N}}$ and $\{v_k\}_{k\in \mathbb{N}}$ be sequences in $\mathbb{R}^d$ such that $v_k \in \partial f(x_k)$ for all $k\in \mathbb{N}$. If $(x_k,v_k)\rightarrow (x,v)$ and $f(x_k)\rightarrow f(x)$ as $k\rightarrow \infty$, then $v \in \partial f (x)$.


(c) If $f: \mathbb{R}^d \rightarrow(-\infty,+\infty]$ be a proper and lower semicontinuous and $h : \mathbb{R}^d \rightarrow \mathbb{R}$ is a continuously differentiable function, then $\partial(f+h)(x) = \partial f (x)+\nabla h(x)$ for all $x \in \mathbb{R}^d$.
\end{remark}
In what follows, we will consider the problem of finding a critical point $(x^\ast,y^\ast)\in$dom$L$.
\begin{lemma}{\rm (Fermat’s rule\cite{RW})}
\label{lem22}
Let $f: \mathbb{R}^{d}\rightarrow \mathbb{R}\cup \left \{ +  \infty  \right \}$ be a proper lower semicontinuous function. If $f$ has a local minimum at $x^{\ast }$, then $0\in \partial f(x^{\ast })$.
\end{lemma}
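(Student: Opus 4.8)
The plan is to prove the stronger fact that $0$ belongs to the Fr\'echet subdifferential $\hat{\partial}f(x^{\ast})$, and then invoke the inclusion $\hat{\partial}f(x^{\ast})\subseteq \partial f(x^{\ast})$ recorded in Remark \ref{rem21}(a) to conclude $0\in\partial f(x^{\ast})$. This reduces the statement to verifying a single $\liminf$ inequality directly from the definition of the Fr\'echet subdifferential, bypassing any need to construct approximating sequences by hand.

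First I would use the hypothesis that $x^{\ast}$ is a local minimizer: there exists a neighborhood $U$ of $x^{\ast}$ on which $f(y)\ge f(x^{\ast})$ for every $y\in U$. Next I would unwind the defining condition for $0\in\hat{\partial}f(x^{\ast})$, which, upon substituting $v=0$ into Definition \ref{Def211}(i), asks precisely that
$$\liminf_{y\to x^{\ast}}\frac{f(y)-f(x^{\ast})}{\|x^{\ast}-y\|_2}\ge 0.$$
For every $y\in U$ with $y\neq x^{\ast}$, the numerator $f(y)-f(x^{\ast})$ is nonnegative by local minimality while the denominator $\|x^{\ast}-y\|_2$ is strictly positive, so each difference quotient is nonnegative. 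Taking the $\liminf$ as $y\to x^{\ast}$ (effectively over the punctured neighborhood $U\setminus\{x^{\ast}\}$) preserves this inequality, whence $0\in\hat{\partial}f(x^{\ast})$. The inclusion from Remark \ref{rem21}(a) then immediately gives $0\in\partial f(x^{\ast})$.

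There is essentially no analytic obstacle here: the entire content is that local minimality forces the relevant difference quotients to be nonnegative near $x^{\ast}$. The only points requiring care are purely bookkeeping, namely restricting attention to the punctured neighborhood so that division by $\|x^{\ast}-y\|_2$ is legitimate, and recognizing that membership in the smaller Fr\'echet subdifferential automatically yields membership in the limiting subdifferential, which spares us from exhibiting sequences $x_k\to x^{\ast}$ with $f(x_k)\to f(x^{\ast})$ and $v_k\to 0$ explicitly.
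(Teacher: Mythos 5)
Your proof is correct: the paper states this lemma as a cited result from Rockafellar--Wets and provides no proof of its own, and your argument --- substituting $v=0$ into the definition of the Fr\'echet subdifferential, observing that local minimality makes every difference quotient nonnegative on a punctured neighborhood, and then invoking the inclusion $\hat{\partial}f(x^{\ast})\subseteq \partial f(x^{\ast})$ from Remark \ref{rem21}(a) --- is exactly the canonical one. The only implicit point worth noting is that $x^{\ast}\in{\rm dom}f$ (so that $f(x^{\ast})$ is finite and the quotient is well defined), which is the standard reading of ``local minimum'' for a proper function.
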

\par We call $x^{\ast }$ is a critical point of $f$ if \ $0\in \partial f(x^{\ast })$. The set of all critical points of $f$ denoted by crit$f$. 
\begin{lemma}{\rm (Descent lemma\cite{BT})}
\label{lem23}
Let $h: \mathbb{R}^{d}\rightarrow \mathbb{R}$ be a continuously differentiable function with gradient $\nabla h$ assumed $L_{h}$-Lipschitz continuous. Then
\begin{equation}
\label{(2.2)}
h\left ( u \right ) \le h\left ( v \right ) +   \left \langle u-v,\nabla h\left ( v \right )  \right \rangle+\frac{L_{h} }2{\left \| u-v \right \| ^{2}_2 ,\ \forall u,v\in \mathbb R^{d}. }
\end{equation}
\end{lemma}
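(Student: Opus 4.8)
The plan is to reduce the multivariate inequality to a one-dimensional statement along the segment joining $v$ to $u$ and then invoke the fundamental theorem of calculus. Concretely, I would fix $u,v\in\mathbb{R}^d$ and define the auxiliary scalar function $\varphi:[0,1]\to\mathbb{R}$ by $\varphi(t)=h\bigl(v+t(u-v)\bigr)$. Since $h$ is continuously differentiable, $\varphi$ is $C^1$ on $[0,1]$ with derivative $\varphi'(t)=\langle \nabla h(v+t(u-v)),\,u-v\rangle$ by the chain rule. Integrating from $0$ to $1$ yields the exact identity
\begin{equation*}
h(u)-h(v)=\varphi(1)-\varphi(0)=\int_0^1 \langle \nabla h(v+t(u-v)),\,u-v\rangle\,dt.
\end{equation*}

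The second step is to isolate the linear term. Subtracting $\langle u-v,\nabla h(v)\rangle=\int_0^1\langle \nabla h(v),u-v\rangle\,dt$ from both sides gives
\begin{equation*}
h(u)-h(v)-\langle u-v,\nabla h(v)\rangle=\int_0^1 \langle \nabla h(v+t(u-v))-\nabla h(v),\,u-v\rangle\,dt.
\end{equation*}
Now I would bound the integrand from above using the Cauchy--Schwarz inequality followed by the $L_h$-Lipschitz continuity of $\nabla h$: the inner product is at most $\|\nabla h(v+t(u-v))-\nabla h(v)\|_2\,\|u-v\|_2$, and the first factor is at most $L_h\,\|t(u-v)\|_2=L_h\,t\,\|u-v\|_2$. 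Substituting and pulling the constant out of the integral leaves $L_h\|u-v\|_2^2\int_0^1 t\,dt$, and since $\int_0^1 t\,dt=\tfrac12$, this is exactly $\tfrac{L_h}{2}\|u-v\|_2^2$, which establishes the claim.

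This is a classical result, so I do not anticipate any genuine obstacle; the argument is entirely routine. If there is a point meriting care, it is simply that the inequality is one-sided and arises solely from upper-bounding the integral remainder, so the direction of the Cauchy--Schwarz and Lipschitz estimates must be respected, and the factor $\tfrac12$ is produced precisely by the weight $t$ accumulated along the segment rather than by any sharper structural feature of $h$. No convexity or higher regularity is needed, which is exactly why this lemma is the natural workhorse for the descent estimates used later in the convergence analysis.
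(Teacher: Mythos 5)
Your argument is correct and is the standard proof of the descent lemma; the paper itself gives no proof, simply citing \cite{BT}, and your integral-remainder argument (fundamental theorem of calculus along the segment, Cauchy--Schwarz, Lipschitz bound, and the weight $\int_0^1 t\,dt=\tfrac12$) is exactly the canonical derivation found in that reference. No gaps.
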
 
\subsection{The Kurdyka--{\L}ojasiewicz property}
\ \par In this section, we recall the K{\L} property, which plays a central role in the convergence analysis.
%
%
%
\begin{definition}
\rm
\label{Def21}(Kurdyka--{\L}ojasiewicz property \cite{ABR})
Let $f: \mathbb{R}^d \rightarrow(-\infty,+\infty]$ be a proper and lower semicontinuous function.

(i)The function  $f: \mathbb{R}^d \rightarrow(-\infty,+\infty]$  is said to have the Kurdyka--{\L}ojasiewicz (K{\L}) property at $x^\ast\in$dom$f$ if there exist $\eta\in (0,+\infty]$, a neighborhood $U$ of $x^\ast$ and a continuous concave function $\varphi:[0,\eta)\rightarrow \mathbb{R}_{+}$ such that $\varphi(0)=0$, $\varphi$ is $C^1$ on $(0,\eta)$, for all $s\in(0,\eta)$ it is $\varphi'(s)>0$ and for all $x$ in $U\cap[f(x^\ast)<f<f(x^\ast)+\eta]$ the Kurdyka--{\L}ojasiewicz  inequality holds, $$\varphi'(f(x)-f(x^\ast)){\rm dist}(0,\partial f(x))\geq 1.$$

(ii)Proper lower semicontinuous functions which satisfy the Kurdyka--{\L}ojasiewicz inequality at each point of {its domain} are called K{\L} functions.
\end{definition}

\begin{lemma}{\rm (Uniformized K{\L} property\cite{RW})}
\label{lem21}
Let $\Omega $ be a compact set and let $f: \mathbb{R}^{d}\rightarrow \mathbb{R}\cup \left \{ +  \infty  \right \}$ be a proper and lower semicontinuous function. Assume that $f$ is constant on $\Omega $ and satisfies the K{\L} property at each point of $\Omega $. Then, there exist $\varepsilon > 0,\eta> 0$ and $\varphi \in \Phi _{\eta } $ such that for all $x^\ast \in \Omega $ and for all $x\in \left \{ x\in\mathbb{R}^d :{\rm dist}(x,\Omega )< \varepsilon \right \} \cap[f(x^\ast)<f<f(x^\ast)+\eta]$, one has
$$\varphi'(f(x)-f(x^\ast)){\rm dist}(0,\partial f(x))\geq 1.$$ 
\end{lemma}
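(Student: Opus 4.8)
The plan is to reduce the statement to finitely many local K{\L} inequalities by a compactness argument and then to glue the resulting desingularizing functions into a single one. Since $f$ is assumed constant on the compact set $\Omega$, write $f\equiv\zeta$ on $\Omega$. For every $\bar x\in\Omega$ the function $f$ satisfies the K{\L} property at $\bar x$, so Definition \ref{Def21} supplies a radius $\eta_{\bar x}\in(0,+\infty]$, an open neighborhood $U_{\bar x}$ of $\bar x$, and $\varphi_{\bar x}\in\Phi_{\eta_{\bar x}}$ such that $\varphi_{\bar x}'(f(x)-\zeta)\,{\rm dist}(0,\partial f(x))\ge 1$ holds for every $x\in U_{\bar x}\cap[\zeta<f<\zeta+\eta_{\bar x}]$.

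First I would cover $\Omega$. The family $\{U_{\bar x}\}_{\bar x\in\Omega}$ is an open cover of the compact set $\Omega$, so I can extract a finite subcover $U_1,\dots,U_p$, with associated constants $\eta_1,\dots,\eta_p$ and functions $\varphi_1,\dots,\varphi_p$. Because $\Omega$ is compact and contained in the open set $U:=\bigcup_{i=1}^p U_i$, a standard argument via the map $z\mapsto{\rm dist}(z,U^{c})$, which is continuous and strictly positive on $\Omega$ and hence attains a positive minimum, produces $\varepsilon>0$ such that $\{x:{\rm dist}(x,\Omega)<\varepsilon\}\subseteq U$. I would also set $\eta:=\min_{1\le i\le p}\eta_i>0$.

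Next I would build the common desingularizing function by setting $\varphi:=\sum_{i=1}^p\varphi_i$ on $[0,\eta)$. Being a finite sum of functions drawn from the $\Phi$ classes, $\varphi$ is continuous on $[0,\eta)$, concave, $C^1$ on $(0,\eta)$, satisfies $\varphi(0)=0$, and has $\varphi'(s)=\sum_{i=1}^p\varphi_i'(s)>0$ there; hence $\varphi\in\Phi_\eta$. The key inequality I would exploit is that, since every $\varphi_i'$ is nonnegative, $\varphi'(s)\ge\varphi_j'(s)$ for each index $j$ and each $s\in(0,\eta)$. Now let $x$ lie in $\{x:{\rm dist}(x,\Omega)<\varepsilon\}\cap[\zeta<f<\zeta+\eta]$. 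Then $x\in U_i$ for some $i$, and since $\zeta<f(x)<\zeta+\eta\le\zeta+\eta_i$, the local K{\L} inequality for $\varphi_i$ applies and gives $\varphi_i'(f(x)-\zeta)\,{\rm dist}(0,\partial f(x))\ge 1$; combining this with $\varphi'(f(x)-\zeta)\ge\varphi_i'(f(x)-\zeta)$ yields the claimed uniform inequality.

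The main obstacle is the gluing step: one must produce a single element of $\Phi_\eta$ whose derivative dominates all the local derivatives simultaneously while retaining concavity and $C^1$ regularity. The finite-sum construction (or, as an alternative, $\varphi(s)=\int_0^s\max_{1\le i\le p}\varphi_i'(t)\,dt$, using that a maximum of non-increasing positive functions is again non-increasing and positive) is precisely what makes these three requirements hold at once; the uniform radius $\varepsilon$ is then a routine consequence of compactness, and the choice $\eta=\min_i\eta_i$ guarantees that membership in the sublevel band forces membership in each local band that is needed.
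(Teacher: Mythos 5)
The paper does not prove this lemma at all: it is quoted as a known result from the literature (the ``Uniformized K{\L} property''), so there is no in-paper argument to compare against. Your proof is correct and is essentially the standard argument for this result (finite subcover of $\Omega$, a compactness-based uniform radius $\varepsilon$, $\eta=\min_i\eta_i$, and the summed desingularizing function $\varphi=\sum_{i=1}^{p}\varphi_i$, whose derivative dominates each $\varphi_i'$), matching the proof given in the cited sources.
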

There is a broad class of functions satisfy the K{\L} property, such as strongly convex functions, real analytic functions, semi-algebraic functions \cite{ABR}, subanalytic functions \cite{WCX}, log-exp functions and so on.
\subsection{Bregman distance}
\begin{definition}
\rm
{ A function $f$ is said convex if dom$f$ is a convex set and if, for all $x$, $y\in$dom$f$, $\alpha\in[0,1]$,
$$f(\alpha x+(1-\alpha)y)\leq \alpha f(x)+(1-\alpha)f(y).$$
$f$ is said $\theta$-strongly convex with $\theta> 0$ if $f-\frac{\theta}{2}\|\cdot\|^2$ is convex, i.e.,
$$f(\alpha x+(1-\alpha)y)\leq \alpha f(x)+(1-\alpha)f(y)-\frac{1}{2}\theta\alpha(1-\alpha)\|x-y\|^2$$
for all $x$, $y\in$dom$f$ and  $\alpha\in[0,1]$.}
\end{definition}
Suppose that the function $f$ is differentiable. Then $f$ is convex if and only if dom$f$ is a convex set and
$$f(x)\ge f(y)+\langle \nabla f(y),x-y\rangle$$
holds for all $x$, $y\in$dom$f$. Moreover, $f$ is $\theta$-strongly convex with $\theta> 0$ if and only if
$$f(x)\ge f(y)+\langle \nabla f(y),x-y\rangle+\frac{\theta}{2}\|x-y\|^2$$
for all $x$, $y\in$dom$f$.
\begin{definition}
\rm
\label{Defbreg}
Let $\phi:\mathbb{R}^d \rightarrow(-\infty,+\infty]$ be a convex and G\^{a}teaux differentiable function.
The function $D_\phi :$ dom$\phi\,\,\times$ intdom$\phi \rightarrow [0,+\infty)$, defined by
$$D_\phi(x,y)=\phi(x)-\phi(y)-\langle \nabla\phi(y),x-y\rangle,$$
is called the Bregman distance with respect to $\phi$.
\end{definition}
{From the above definition,  it follows  that
\begin{equation}
\label{(2.1)}
D_\phi(x,y)\geq\frac{\theta}{2}\|x-y\|^2,
\end{equation}
if $\phi$ is $\theta$-strongly convex. 
\section{Algorithm and convergence analysis}\label{sect3}
\begin{Assumption}
\label{Assumption31}
\rm
(i) $L:\mathbb R^{n}\times \mathbb R^{m}\to \mathbb R\cup \left \{ \infty  \right \} $ is lower bounded.

(ii) $f:\mathbb R^{n}\to \mathbb R$ and $g:\mathbb R^{m}\to \mathbb R$ are continuously differentiable and their gradients $\nabla f$ and $\nabla g$ are Lipschitz continuous with constants $L_{\nabla f}$, $L_{\nabla g}$ respectively.

(iii) $Q:\mathbb R^{n}\times \mathbb R^{m}\to \mathbb R\cup \left \{ \infty  \right \} $ is a proper, lower semicontinuous.

(iv) $\phi_i(i=1,2)$ is $\theta_i$-strongly convex differentiable function, $\theta_1>L_{\nabla f}$, $\theta_2>L_{\nabla g}$.
And the gradient $\nabla \phi_i$ is $\eta_i$-Lipschitz continuous, i.e.,
\begin{equation}
\label{(33.2)}
\aligned
&\left \| \nabla{\phi_1}(x) -\nabla{\phi_1}(\hat{x})\right \|\le \eta_1 \|x-\hat{x}\|,\ \forall x,\hat{x}\in \mathbb R^{n},\\
&\left \| \nabla{\phi_2}(y) -\nabla{\phi_2}(\hat{y})\right \|\le \eta_2 \|y-\hat{y}\|,\ \forall y,\hat{y}\in \mathbb R^{m}. 
\endaligned
\end{equation}
\end{Assumption}
\subsection{The proposed algorithm}

\begin{algorithm}[H]
\caption{TiBASAP: Two-step inertial Bregman alternating structure-adapted proximal gradient descent algorithm}\label{alg1}
\begin{algorithmic}
\Require 
Take $(x_0,y_0)\in \mathbb{R}^n\times \mathbb{R}^m, (\hat{x} _0,\hat{y} _0)=(x_0,y_0), \alpha  _k\in [0,\alpha _{\max}], \beta _k\in [0,\beta _{\max}], \alpha  _{\max}+\beta _{\max}<1, k=0.$\\
1. \noindent{\bf Compute}
\begin{equation}
\label{(3.1)}
\begin{cases}
\aligned
x_{k+1}\in \arg\min_{ x\in \mathbb{R}^n}\{&Q(x,\hat{y} _k)+\langle \nabla f(\hat{x}_k),x\rangle+D_{\phi_1}(x,\hat{x} _k)\},\\
y_{k+1}\in \arg\min_{y\in \mathbb{R}^m}\{&Q(x_{k+1},y)+\langle \nabla g(\hat{y} _k),y\rangle+D_{\phi_2}(y,\hat{y} _k)\}.
\endaligned
\end{cases}
\end{equation}
2.
\begin{equation}
\label{(3.2)}
\binom{u_{k+1}}{v_{k+1}} =\binom{x_{k+1}}{y_{k+1}}+\alpha _k\binom{x_{k+1}-x_k}{y_{k+1}-y_k}+\beta_k\binom{x_{k}-x_{k-1}}{y_{k}-y_{k-1}}.
\end{equation}
3. \noindent{\bf If} $L(u_{k+1},v_{k+1})\le L(x_{k+1},y_{k+1})$, \noindent{\bf then}
\begin{equation}
\label{(3.3)}
\hat{x} _{k+1}=u_{k+1},\ \hat{y} _{k+1}=v_{k+1},
\end{equation}
\noindent{\bf else}
\begin{equation}
\label{(3.4)}
\hat{x} _{k+1}=x_{k+1},\ \hat{y} _{k+1}=y_{k+1}.
\end{equation}
4. \noindent{\bf Set} $k\gets k+1$, go to step1.
\end{algorithmic}
\end{algorithm}
\bigskip


{\begin{remark}\rm We discuss the relation of Algorithm \ref{alg1} to the other existing algorithms from literatures.
\begin{itemize}
\item[(i)]
If we take $\phi_1(x)=\frac{1}{2\sigma}\|x\|^2_2$ and $\phi_2(y)=\frac{1}{2\tau}\|y\|^2_2$ for all $x\in \mathbb{R}^n$ and $y\in \mathbb{R}^m$, then Algorithm \ref{alg1} becomes the following iterative method:
\begin{equation}
\label{(3.0)}
\begin{cases}
x_{k+1}\in \arg\min_{ x\in \mathbb{R}^n}\{Q(x,\hat{y} _k)+\langle \nabla f(\hat{x}_k),x\rangle+\frac{1}{2\sigma}\|x-\hat{x} _k\|^2_2\},\\
y_{k+1}\in \arg\min_{y\in \mathbb{R}^m}\{Q(x_{k+1},y)+\langle \nabla g(\hat{y} _k),y\rangle+\frac{1}{2\tau}\|y-\hat{y}_k\|^2_2\},\\
u_{k+1}=x_{k+1}+\alpha_{k}(x_{k+1}-x_{k})+\beta_{k}(x_{k}-x_{k-1}),\\
v_{k+1}=y_{k+1}+\alpha_{k}(y_{k+1}-y_{k})+\beta_{k}(y_{k}-y_{k-1}),\\
{\rm if}  \ L(u_{k+1},v_{k+1})\le L(x_{k+1},y_{k+1}), \ {\rm then} \ \hat{x} _{k+1}=u_{k+1}, \hat{y} _{k+1}=v_{k+1},\\
{\rm else}  \  \hat{x} _{k+1}=x_{k+1}, \hat{y} _{k+1}=y_{k+1}.
\end{cases}
\end{equation}
\item[(ii)]
Letting $\beta_{k}\equiv 0$ for all $k\geq 0$, \eqref{(3.0)} becomes the  accelerated alternating structure-adapted proximal gradient descent (aASAP) algorithm \eqref{aASAP}.
\item[(iii)]
Letting $\alpha_{k}\equiv\beta_{k}\equiv 0$ for all $k\geq 0$,  \eqref{(3.0)} becomes the alternating structure-adapted proximal gradient descent (ASAP) algorithm \eqref{ASAP}.

\end{itemize}
\end{remark}}
{\begin{remark}\rm
Compared with the traditional extrapolation algorithm, the main difference is step 3 which ensures the algorithm is a monotone method in terms of objective function value, while general extrapolation algorithms may be nonmonotonic.
\end{remark}}
\par For extrapolation parameters $\alpha  _k$ and $\beta _k$, there are at least two ways to choose them, either as costant or by dynamic update.
For example, in \cite{LLA,ZZ} it was defined as
\begin{equation}
\label{fista}
\begin{cases}
\alpha  _k=\beta_k=\frac{t_{k-1}-1}{2t_k},\\
t_{k+1}=\frac{1+\sqrt{1+4t_{k}^{2}}}{2},\\
\end{cases}
\end{equation}
where $t_{-1}=t_0=1$. In order to make Algorithm \ref{alg1} more effective, we present an adaptive method to update the extrapolation parameter $\alpha  _k$, $\beta _k$, which is given in Algorithm \ref{alg2}.
\begin{algorithm}[H]
\caption{Two-step inertial Bregman alternating structure-adapted proximal gradient descent with adaptive extrapolation parameter algorithm}\label{alg2}
\begin{algorithmic}
\Require 
Take $(x_0,y_0)\in \mathbb{R}^n\times \mathbb{R}^m,\ (\hat{x} _0, \hat{y} _0)=(x_0,y_0), \ \alpha  _0\in [0,\alpha _{\max}], \ \beta _0\in [0,\beta _{\max}], \ \alpha  _{\max}+\beta _{\max}<1, \ t>1, \ k=0.$\\
1. \noindent{\bf Compute}
\begin{equation}
\label{(3.5)}
\begin{cases}
\aligned
x_{k+1}\in \arg\min_{ x\in \mathbb{R}^n}\{&Q(x,\hat{y} _k)+\langle \nabla f(\hat{x}_k),x\rangle+D_{\phi_1}(x,\hat{x} _k)\},\\
y_{k+1}\in \arg\min_{y\in \mathbb{R}^m}\{&Q(x_{k+1},y)+\langle \nabla g(\hat{y} _k),y\rangle+D_{\phi_2}(y,\hat{y} _k)\}.
\endaligned
\end{cases}
\end{equation}
2.
\begin{equation}
\label{(3.6)}
\binom{u_{k+1}}{v_{k+1}} =\binom{x_{k+1}}{y_{k+1}}+\alpha _k\binom{x_{k+1}-x_k}{y_{k+1}-y_k}+\beta_k\binom{x_{k}-x_{k-1}}{y_{k}-y_{k-1}}.
\end{equation}
3. \noindent{\bf If} $L(u_{k+1},v_{k+1})\le L(x_{k+1},y_{k+1})$, \noindent{\bf then}
\begin{equation}
\label{(3.7)}
\aligned
&\hat{x} _{k+1}=u_{k+1},\ \hat{y} _{k+1}=v_{k+1},\\
&\alpha_{k+1}=\min\left \{ t\alpha _k,\alpha _{\max} \right \}, \\
&\beta_{k+1}=\min\left \{ t\beta _k,\beta _{\max} \right \}, 
\endaligned
\end{equation}
\noindent{\bf else}
\begin{equation}
\label{(3.8)}
\aligned
&\hat{x} _{k+1}=x_{k+1},\ \hat{y} _{k+1}=y_{k+1},\\
&\alpha _{k+1}=\frac{\alpha _k}{t},\\
&\beta _{k+1}=\frac{\beta _k}{t}.  
\endaligned
\end{equation}
4. \noindent{\bf Set} $k\gets k+1$, go to step1.
\end{algorithmic}
\end{algorithm}
\bigskip
{\begin{remark}\rm
Compared with constant or dynamic update by \eqref{fista}, the adaptive extrapolation parameters $\alpha  _k$ and $\beta _k$ in Algorithm \ref{alg2} may adopt more extrapolation steps. The numerical results will verify the effectiveness of the adaptive strategy in Section \ref{sect4}.
\end{remark}}
{\begin{remark}\rm
\par A possible drawback of Algorithm \ref{alg1} and Algorithm \ref{alg2} is that the Lipschitz constants $L_{\nabla f}$, $L_{\nabla g}$ are not always known or computable. However, the Lipschitz constant determines the strong convexity modulus range of $\phi_1$ and $\phi_2$. Especially, if $D_{\phi_i}=\frac{1}{2} \left \| \cdot  \right \| ^2,\ (i=1,2)$, the Lipschitz constant $L_{\nabla f}$, $L_{\nabla g}$ determines the range of stepsize $\sigma, \tau$ in \eqref{(3.0)}. Even if the Lipschitz constant is known, it is always large in general, which makes the strong convexity modulus $\theta_1$, $\theta_2$ very small. Therefore, in order to improve the efficiency of the algorithm, a method for estimating a proper local Lipschitz constant will be given below.
\end{remark}}
A backtracking method is proposed to evaluate the local Lipschitz constant. We adopt Barzilai-Borwein (BB) method \cite{BB} with lower bound to initialize the stepsize at each iteration. The procedure of computing the stepsize of the $x$-subproblem is shown in the following box.\\
\noindent \fbox{\parbox{\textwidth}{%
\noindent{\bf Backtracking with BB method with lower bound}
\par \ \ \noindent{\bf Compute the BB stepsize:} set $t_{\min}>0$,
\begin{equation*}
\aligned
&l_k=\nabla f(x_k)-\nabla f(\hat{x}_{k-1}),\\
&s_k=x_k-\hat{x}_{k-1},\\
&t_k=\max\left \{ \frac{\left | s_{k}^{T} l_k \right | }{ s_{k}^{T} s_k},t_{\min}  \right \}.
\endaligned
\end{equation*}
\noindent{\bf Backtracking}: set $\rho>1, \delta >0$.
\par \ \ \noindent{\bf Repeat compute}:
\begin{equation*}
\aligned
&x_{k+1}\in \arg\min_{ x\in \mathbb{R}^n}\{Q(x,\hat{y} _k)+\langle \nabla f(\hat{x}_k),x\rangle+t_kD_{\phi_1}(x,\hat{x} _k)\},\\
&t_k\gets \rho t_k.  
\endaligned
\end{equation*}
\par \ \ \noindent{\bf until}
\begin{equation}
\label{(3.12)}
Q(x_{k+1},\hat{y} _k)+f(x_{k+1})\le Q(\hat{x}_k,\hat{y} _k)+f(\hat{y} _k)-\frac{\delta}{2}\|x_{k+1}-\hat{x} _k\|^2_2.
\end{equation}
}}
{\begin{remark}\rm
The stepsize of $y$-subproblem can be obtained similarly. The process of enlarging $t_k$ is actually approaching the local Lipschitz constant of $f$. Since $f$ is gradient Lipschitz continuous, the backtracking process can be terminated in finite steps for achieving a suitable $t_k$.
\end{remark}}
\subsection{Convergence analysis}
\ \par In this section, we will prove the convergence of Algorithm \ref{alg1}. Note that the bound of $\alpha _k$ and $\beta_k$ is no more than $\alpha _{\max}$ and $\beta _{\max}$ in Algorithm \ref{alg2}, respectively. So the convergence properties of Algorithm \ref{alg1} are also applicable for Algorithm \ref{alg2}. 

\par Under Assumption \ref{Assumption31}, some convergence results will be proved (see Lemma \ref{lem31}). We will also consider the following additional assumptions to establish stronger convergence results.
\begin{Assumption}
\label{Assumption32}
\rm
(i) $L$ is coercive and the domain of $Q$ is closed.

(ii) The subdifferential of $Q$ obeys:
$$ \forall (x,y)\in {\rm dom} \ Q, \partial _xQ(x,y)\times \partial _yQ(x,y)\subset \partial Q(x,y).$$

(iii) $Q:\mathbb R^{n}\times \mathbb R^{m}\to \mathbb R\cup \left \{ \infty  \right \} $ has the following form
$$Q(x,y)=q(x,y)+h(x),$$
where $h:\mathbb R^{n}\to \mathbb R\cup \left \{ \infty  \right \} $ is continuous on its domain; $q:\mathbb R^{n}\times \mathbb R^{m}\to \mathbb R\cup \left \{ \infty  \right \} $ is a continuous function on dom $Q$ such that for any $y$, the partial function $q(\cdot ,y )$ is continuously differentiable about $x$. Besides, for each bounded subset $D_1\times D_2\subset {\rm dom}Q$, there exists $\xi >0$, such that for any $\bar x\in D_1$, $(y,\bar y)\in D_2\times D_2$, it holds that
$$\left \| \nabla _xq(\bar x,y)- \nabla _xq(\bar x,\bar y)\right \| \le \xi \left \| y-\bar y \right \| .$$
\end{Assumption}
\begin{remark}
\label{rem31}
\rm
(i) Assumption \ref{Assumption32}(i) ensure that the sequences generated by our proposed algorithms is bounded which plays an important role in the proof of convergence.
\par(ii) Assumption \ref{Assumption32}(ii) is a generic assumption for the convergence of alternating schemes. Because $f$ and $g$ are continuously differentiable, we have $\partial _xL(x,y)\times \partial _yL(x,y)\subset \partial L(x,y)$ by Remark \ref{rem21} (c).
\par(iii) From Assumptions \ref{Assumption31} and \ref{Assumption32}, $L(x,y)$ is continuous on its domain, equal to dom $Q$, which is nonempty and closed. For any sequence $\left \{ (x_k, y_k ) \right \} $ converges to $(\bar x, \bar y)$, it holds that $\left \{L (x_k, y_k ) \right \} $ converges to $L(\bar x, \bar y)$.
\end{remark}
\begin{lemma}
\label{lem31}
{\it Suppose that Assumption \ref{Assumption31} hold. Let $\left \{ (x_k, y_k ) \right \} $ and $\left \{ (\hat{x}_k, \hat{y}_k ) \right \} $ are sequences generated by Algorithm \ref{alg1}. The following assertions hold.

{\rm (i)} The sequences $\left \{ L(x_k, y_k ) \right \}$, $\left \{ L(\hat{x}_k, \hat{y}_k ) \right \}$ are monotonically nonincreasing and have the same limiting point, i.e.
$$\lim_{k \to \infty } L(x_k, y_k )=\lim_{k \to \infty } L(\hat{x}_k, \hat{y}_k )=L^\ast. $$
In particular,
\begin{equation}
\label{(3.9)}
\aligned
&L(x_{k+1}, y_{k+1} )\le L(x_k, y_k )-\rho \left \| (x_{k+1}-\hat{x}_k,y_{k+1}-\hat{y}_k) \right \|^2 ,\\
&L(\hat{x}_{k+1}, \hat{y}_{k+1} )\le L(\hat{x}_k, \hat{y}_k )-\rho \left \| (x_{k+1}-\hat{x}_k,y_{k+1}-\hat{y}_k) \right \|^2,
\endaligned
\end{equation}
where $\rho=\min\left \{ \frac{\theta _{1}-L_{\nabla f}}{2}, \frac{\theta _{2}-L_{\nabla g}}{2} \right \} .$

{\rm (ii)} $\sum_{k=0}^{\infty } \left \| (x_{k+1}-\hat{x}_k,y_{k+1}-\hat{y}_k) \right \|^2< \infty ,$ and hence
$$\lim_{k \to \infty }\left \| x_{k+1}-\hat{x}_k \right \| =0, \ \lim_{k \to \infty }\left \| y_{k+1}-\hat{y}_k \right \| =0.$$
}
\end{lemma}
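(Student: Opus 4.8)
The plan is to derive a sufficient-decrease inequality from the two subproblem optimality conditions and then exploit the monotone selection step to transfer that decrease from the extrapolated points back to the iterates. First I would use that $x_{k+1}$ minimizes the $x$-subproblem in \eqref{(3.1)}, so testing its objective against the feasible point $\hat{x}_k$ (and using $D_{\phi_1}(\hat{x}_k,\hat{x}_k)=0$) gives
\begin{equation*}
Q(x_{k+1},\hat{y}_k)+\langle\nabla f(\hat{x}_k),x_{k+1}-\hat{x}_k\rangle+D_{\phi_1}(x_{k+1},\hat{x}_k)\le Q(\hat{x}_k,\hat{y}_k).
\end{equation*}
The Descent Lemma (Lemma \ref{lem23}) applied to $f$ yields $\langle\nabla f(\hat{x}_k),x_{k+1}-\hat{x}_k\rangle\ge f(x_{k+1})-f(\hat{x}_k)-\tfrac{L_{\nabla f}}{2}\|x_{k+1}-\hat{x}_k\|^2$, and the $\theta_1$-strong convexity bound \eqref{(2.1)} gives $D_{\phi_1}(x_{k+1},\hat{x}_k)\ge\tfrac{\theta_1}{2}\|x_{k+1}-\hat{x}_k\|^2$. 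Substituting both produces
\begin{equation*}
Q(x_{k+1},\hat{y}_k)+f(x_{k+1})+\tfrac{\theta_1-L_{\nabla f}}{2}\|x_{k+1}-\hat{x}_k\|^2\le Q(\hat{x}_k,\hat{y}_k)+f(\hat{x}_k).
\end{equation*}
An entirely parallel argument on the $y$-subproblem, using $g$, $\phi_2$ and the constant $\theta_2-L_{\nabla g}$, gives the analogous estimate with coupling term $Q(x_{k+1},y_{k+1})$ bounded above by $Q(x_{k+1},\hat{y}_k)$.

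The second step is to add these two inequalities after inserting the spectator terms $g(\hat{y}_k)$ and $f(x_{k+1})$, so that the coupling term $Q(x_{k+1},\hat{y}_k)$ cancels and both sides regroup into values of $L=f+Q+g$. Setting $\rho=\min\{\tfrac{\theta_1-L_{\nabla f}}{2},\tfrac{\theta_2-L_{\nabla g}}{2}\}$, which is strictly positive by Assumption \ref{Assumption31}(iv), this yields the one-sided decrease
\begin{equation*}
L(x_{k+1},y_{k+1})\le L(\hat{x}_k,\hat{y}_k)-\rho\,\|(x_{k+1}-\hat{x}_k,y_{k+1}-\hat{y}_k)\|^2.
\end{equation*}

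The crux is then to link $L(\hat{x}_k,\hat{y}_k)$ back to $L(x_k,y_k)$ via the selection step \eqref{(3.3)}--\eqref{(3.4)}. By construction, whether the test $L(u_k,v_k)\le L(x_k,y_k)$ succeeds or fails, one always has $L(\hat{x}_k,\hat{y}_k)\le L(x_k,y_k)$, and likewise $L(\hat{x}_{k+1},\hat{y}_{k+1})\le L(x_{k+1},y_{k+1})$ at the next index. Chaining these with the decrease inequality delivers both lines of \eqref{(3.9)} simultaneously and shows $L(x_{k+1},y_{k+1})\le L(\hat{x}_k,\hat{y}_k)\le L(x_k,y_k)$, so both sequences are nonincreasing; lower boundedness (Assumption \ref{Assumption31}(i)) then forces convergence, and the sandwich forces the common limit $L^\ast$. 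For part (ii) I would telescope $\rho\,\|(x_{k+1}-\hat{x}_k,y_{k+1}-\hat{y}_k)\|^2\le L(x_k,y_k)-L(x_{k+1},y_{k+1})$ over $k$, bounding the partial sums by $L(x_0,y_0)-L^\ast<\infty$, which gives summability and hence the two stated limits. I expect the only genuinely delicate point to be the bookkeeping of which argument is frozen versus updated when summing the two subproblem estimates—arranging the cancellation of $Q(x_{k+1},\hat{y}_k)$ correctly—rather than any deep analytic difficulty, since the monotonicity itself is essentially handed to us by the selection step.
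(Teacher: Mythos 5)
Your proposal is correct and follows essentially the same route as the paper's own proof: testing the subproblem optimality against $\hat{x}_k$ and $\hat{y}_k$, combining with the Descent Lemma and the strong-convexity lower bound \eqref{(2.1)} on the Bregman terms, summing so that $Q(x_{k+1},\hat{y}_k)$ cancels, invoking the selection step to get $L(\hat{x}_{k+1},\hat{y}_{k+1})\le L(x_{k+1},y_{k+1})\le L(\hat{x}_k,\hat{y}_k)\le L(x_k,y_k)$, and telescoping for part (ii). No gaps.
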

\begin{proof} (i) 
From Lemma \ref{lem23}, we have
\begin{equation}
\label{(3.10)}
f(x_{k+1} )\le f(\hat {x}_{k} )+\langle \nabla f(\hat{x}_k),x _{k+1}-\hat{x}_k\rangle+\frac{L_{\nabla f}}{2}\left \|x _{k+1}-\hat{x}_k  \right \|^2.
\end{equation}
According to $x$-subproblem in the iterative scheme \eqref{(3.1)}, we obtain
$$Q(x_{k+1},\hat{y} _k)+\langle \nabla f(\hat{x}_k),x_{k+1}\rangle+D_{\phi_1}(x_{k+1},\hat{x} _k)\le Q(\hat{x}_k,\hat{y} _k)+\langle \nabla f(\hat{x}_k),\hat{x}_k\rangle+D_{\phi_1}(\hat{x}_k,\hat{x} _k),$$
which implies that\\
\begin{equation}
\label{(3.11)}
Q(x_{k+1},\hat{y} _k)+\langle \nabla f(\hat{x}_k),x_{k+1}-\hat{x} _k\rangle+D_{\phi_1}(x_{k+1},\hat{x} _k)\le Q(\hat{x}_k,\hat{y} _k).
\end{equation}
By \eqref{(2.1)}, it follows from \eqref{(3.10)} and \eqref{(3.11)} that\\
\begin{equation}
\label{(3.12)}
\aligned
f(x_{k+1} )+Q(x_{k+1},\hat{y} _k)
&\le f(\hat {x}_{k} )+Q(\hat{x}_k,\hat{y} _k)+\frac{L_{\nabla f}}{2}\left \|x _{k+1}-\hat{x}_k  \right \|^2-D_{\phi_1}(x_{k+1},\hat{x} _k)\\
&\le f(\hat {x}_{k} )+Q(\hat{x}_k,\hat{y} _k)-\frac{\theta _1-L_{\nabla f}}{2}\left \|x _{k+1}-\hat{x}_k  \right \|^2.
\endaligned
\end{equation}
Similarly, for $y$-subproblem, we can get\\
\begin{equation}
\label{(3.13)}
g(y_{k+1} )+Q(x_{k+1},y _{k+1})
\le g(\hat {y}_{k} )+Q(x_{k+1},\hat{y} _k)-\frac{\theta _2-L_{\nabla g}}{2}\left \|y _{k+1}-\hat{y}_k  \right \|^2.
\end{equation}
Adding \eqref{(3.12)} and \eqref{(3.13)}, we have\\
\begin{equation}
\label{(3.14)}
\aligned
&f(x_{k+1} )+Q(x_{k+1},y _{k+1})+g(y_{k+1} )\\
\le &f(\hat {x}_{k} )+Q(\hat{x}_k,\hat{y} _k)+ g(\hat {y}_{k} )-\frac{\theta _1-L_{\nabla f}}{2}\left \|x _{k+1}-\hat{x}_k  \right \|^2-\frac{\theta _2-L_{\nabla g}}{2}\left \|y _{k+1}-\hat{y}_k  \right \|^2\\
\le &f(\hat {x}_{k} )+Q(\hat{x}_k,\hat{y} _k)+ g(\hat {y}_{k} )-\rho(\left \|x _{k+1}-\hat{x}_k  \right \|^2+\left \|y _{k+1}-\hat{y}_k  \right \|^2),
\endaligned
\end{equation}
which can be abbreviated as
\begin{equation}
\label{(3.15)}
L(x_{k+1},y _{k+1})\le L(\hat{x}_k,\hat{y} _k)-\rho\left \| (x_{k+1}-\hat{x}_k,y_{k+1}-\hat{y}_k) \right \|^2.
\end{equation}
By the choice of \eqref{(3.3)} and \eqref{(3.4)} for $(\hat{x}_k,\hat{y} _k)$  in Algorithm \ref{alg1}, we get\\
\begin{equation}
\label{(3.16)}
\aligned
L(\hat{x}_{k+1},\hat{y}_{k+1})\le L(x_{k+1},y _{k+1})
&\le L(\hat{x}_k,\hat{y} _k)-\rho \left \| (x_{k+1}-\hat{x}_k,y_{k+1}-\hat{y}_k) \right \|^2\\
&\le L(x_k,y _k)-\rho \left \| (x_{k+1}-\hat{x}_k,y_{k+1}-\hat{y}_k) \right \|^2.
\endaligned
\end{equation}
Hence $\left \{  L(x_k,y _k)\right \}$ and $\left \{  L(\hat{x}_k,\hat{y} _k)\right \}$ are nonincreasing sequences, and\\
\begin{equation}
\label{(3.17)}
L(x_{k+1},y _{k+1})\le L(\hat{x}_k,\hat{y} _k)\le L(x_k,y _k)
\end{equation}
holds. Furthermore, $L$ is lower bounded according to Assumption \ref{Assumption31}, therefore the sequence ${L(x_k,y _k)}$ is convergent.  Let $\lim_{k \to \infty} L(x_k,y _k)=L^\ast $. Taking limit on \eqref{(3.17)}, and using the squeeze theorem, we get $\lim_{k \to \infty} L(\hat{x}_k,\hat{y} _k)=L^\ast$. 

(ii) It follows from \eqref{(3.16)} that
\begin{equation}
\label{(3.18)}
\rho \left \| (x_{k+1}-\hat{x}_k,y_{k+1}-\hat{y}_k) \right \|^2\le L(x_k,y _k)-L(x_{k+1},y _{k+1}).
\end{equation}
Summing up \eqref{(3.18)} from $k= 0,\cdots,K$, we obtain\\
\begin{equation}
\label{(3.19)}\
\aligned
\sum_{k=0}^{K} \rho \left \| (x_{k+1}-\hat{x}_k,y_{k+1}-\hat{y}_k) \right \|^2
&\le L(x_0,y _0)-L(x_{K+1},y _{K+1})\\
&\le L(x_0,y _0)-L^\ast < + \infty.
\endaligned
\end{equation}
Let $K\to \infty $, it follows that $\lim_{k \to \infty} \left \|(x _{k+1}-\hat{x}_k,y_{k+1}-\hat{y}_k)  \right \|^2=0$, and therefore\\
\begin{equation}
\label{(3.20)}
\lim_{k \to \infty} \left \|x _{k+1}-\hat{x}_k\right \|=0,\   \  \  \ \lim_{k \to \infty} \left \|y_{k+1}-\hat{y}_k\right \|=0.
\end{equation}
\end{proof}
\begin{lemma}
\label{lem32}
{\it Suppose that Assumption \ref{Assumption31} and Assumption \ref{Assumption32} hold. Let $\left \{  (x_k, y_k )\right \}$ and $\left \{  (\hat{x}_k,\hat{y} _k)\right \}$ be the sequences generated by Algorithm \ref{alg1}. For any integer $k\ge1$, set
\begin{equation}
\label{(3.25)}
p_{x}^{k+1} =\nabla_xq(x_{k+1},y_{k+1})-\nabla_xq(x_{k+1},\hat{y}_k)+q_{x}^{k+1},
\end{equation}
\begin{equation}
\label{(3.22)}
p_{y}^{k+1} =\nabla g(y_{k+1})-\nabla g( \hat{y}_k)-\nabla{\phi _2}(y_{k+1})+\nabla{\phi _2}(\hat{y}_k),
\end{equation}
where $q_{x}^{k+1} =\nabla f(x_{k+1})-\nabla f( \hat{x}_k)-\nabla{\phi _1}(x_{k+1})+\nabla{\phi _1}(\hat{x}_k)$, then $(p_{x}^{k+1},p_{y}^{k+1})\in \partial L(x_{k+1},y_{k+1})$, and there exists $\varrho>0$, such that
\begin{equation}
\label{(3.26)}
\left \|(p_{x}^{k+1},p_{y}^{k+1})  \right \| \le \varrho\left \|(x _{k+1}-\hat{x}_k,y_{k+1}-\hat{y}_k)  \right \|.
\end{equation}
}
\end{lemma}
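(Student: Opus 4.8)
The plan is to read off the pair $(p_x^{k+1},p_y^{k+1})$ directly from the first-order optimality conditions of the two subproblems in \eqref{(3.1)} and then verify the inclusion and the estimate separately. First I would apply Fermat's rule (Lemma \ref{lem22}) to the $x$-subproblem. Since the linear term $\langle\nabla f(\hat x_k),\cdot\rangle$ and the Bregman term $D_{\phi_1}(\cdot,\hat x_k)$ are continuously differentiable in $x$, with $\nabla_x D_{\phi_1}(x,\hat x_k)=\nabla\phi_1(x)-\nabla\phi_1(\hat x_k)$, the sum rule of Remark \ref{rem21}(c) yields
$$0\in\partial_x Q(x_{k+1},\hat y_k)+\nabla f(\hat x_k)+\nabla\phi_1(x_{k+1})-\nabla\phi_1(\hat x_k).$$
Using the decomposition $Q=q+h$ from Assumption \ref{Assumption32}(iii) I would write $\partial_x Q(x_{k+1},\hat y_k)=\nabla_x q(x_{k+1},\hat y_k)+\partial h(x_{k+1})$, isolate the $\partial h(x_{k+1})$ term, and then add $\nabla f(x_{k+1})+\nabla_x q(x_{k+1},y_{k+1})$ to both sides. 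Because $h$ depends only on $x$, the same $\partial h(x_{k+1})$ reappears in $\partial_x L(x_{k+1},y_{k+1})=\nabla f(x_{k+1})+\nabla_x q(x_{k+1},y_{k+1})+\partial h(x_{k+1})$, and the surviving terms collapse exactly to the defined $p_x^{k+1}$, giving $p_x^{k+1}\in\partial_x L(x_{k+1},y_{k+1})$.

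For the $y$-direction the computation is cleaner, since the $y$-subproblem already uses the updated iterate $x_{k+1}$. Fermat's rule gives $0\in\partial_y Q(x_{k+1},y_{k+1})+\nabla g(\hat y_k)+\nabla\phi_2(y_{k+1})-\nabla\phi_2(\hat y_k)$, and adding $\nabla g(y_{k+1})$ together with $\partial_y L(x_{k+1},y_{k+1})=\partial_y Q(x_{k+1},y_{k+1})+\nabla g(y_{k+1})$ shows $p_y^{k+1}\in\partial_y L(x_{k+1},y_{k+1})$ with no $q$-discrepancy term, which is precisely why $p_y^{k+1}$ has a simpler form than $p_x^{k+1}$. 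Combining the two inclusions through Assumption \ref{Assumption32}(ii) (equivalently Remark \ref{rem31}(ii)), which guarantees $\partial_x L\times\partial_y L\subset\partial L$, delivers $(p_x^{k+1},p_y^{k+1})\in\partial L(x_{k+1},y_{k+1})$.

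For the estimate \eqref{(3.26)} I would bound each block by the triangle inequality. The term $q_x^{k+1}$ is controlled by the Lipschitz continuity of $\nabla f$ (constant $L_{\nabla f}$, Assumption \ref{Assumption31}(ii)) and of $\nabla\phi_1$ (constant $\eta_1$, Assumption \ref{Assumption31}(iv)), giving $\|q_x^{k+1}\|\le(L_{\nabla f}+\eta_1)\|x_{k+1}-\hat x_k\|$, while the extra discrepancy term $\nabla_x q(x_{k+1},y_{k+1})-\nabla_x q(x_{k+1},\hat y_k)$ is bounded by $\xi\|y_{k+1}-\hat y_k\|$ via the partial-Lipschitz property in Assumption \ref{Assumption32}(iii); hence $\|p_x^{k+1}\|\le(L_{\nabla f}+\eta_1)\|x_{k+1}-\hat x_k\|+\xi\|y_{k+1}-\hat y_k\|$. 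Likewise $\|p_y^{k+1}\|\le(L_{\nabla g}+\eta_2)\|y_{k+1}-\hat y_k\|$. Since each of $\|x_{k+1}-\hat x_k\|$ and $\|y_{k+1}-\hat y_k\|$ is dominated by $\|(x_{k+1}-\hat x_k,y_{k+1}-\hat y_k)\|$, summing the two bounds yields \eqref{(3.26)} with $\varrho=L_{\nabla f}+L_{\nabla g}+\eta_1+\eta_2+\xi$.

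The main obstacle is twofold. First, invoking Assumption \ref{Assumption32}(iii) requires the iterates $\{(x_k,y_k)\}$ and $\{(\hat x_k,\hat y_k)\}$ to remain in one fixed bounded set, because the constant $\xi$ is only guaranteed on bounded subsets of $\mathrm{dom}\,Q$; this I would establish first from the coercivity of $L$ in Assumption \ref{Assumption32}(i) together with the monotone descent of Lemma \ref{lem31}, which confines $(x_k,y_k)$, and hence the extrapolated $(\hat x_k,\hat y_k)$, to the sublevel set $\{L\le L(x_0,y_0)\}$. Second, the bookkeeping in the $x$-direction must be handled carefully: the subproblem is evaluated at the stale argument $\hat y_k$ whereas the target subgradient lives at $y_{k+1}$, and it is exactly this Gauss--Seidel staggering that produces the correction $\nabla_x q(x_{k+1},y_{k+1})-\nabla_x q(x_{k+1},\hat y_k)$, which must be carried through the identity and then absorbed by the $\xi$-term in the final bound.
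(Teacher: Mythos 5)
Your proposal is correct and follows essentially the same route as the paper's proof: Fermat's rule on each subproblem, the decomposition $Q=q+h$ from Assumption \ref{Assumption32}(iii) to transfer the $x$-optimality condition from $(x_{k+1},\hat y_k)$ to $(x_{k+1},y_{k+1})$, Assumption \ref{Assumption32}(ii) for the product inclusion, and coercivity plus the Lipschitz constants $L_{\nabla f},L_{\nabla g},\eta_1,\eta_2,\xi$ for the bound. The only cosmetic difference is how you aggregate the blockwise estimates (triangle inequality giving $\varrho=L_{\nabla f}+L_{\nabla g}+\eta_1+\eta_2+\xi$, versus the paper's $(a+b)^2\le 2(a^2+b^2)$ giving $\varrho^2=\max\{2(L_{\nabla f}+\eta_1)^2,\,2\xi^2+2(L_{\nabla g}+\eta_2)^2\}$), which is immaterial since the lemma only asserts existence of some $\varrho>0$.
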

\begin{proof}
From the iterative scheme \eqref{(3.1)}, we know\\
$$x_{k+1}\in \arg\min_{ x\in \mathbb{R}^n}\{Q(x,\hat{y} _k)+\langle \nabla f(\hat{x}_k),x\rangle+D_{\phi_1}(x,\hat{x} _k)\}.$$
By Fermat’s rule, $x_{k+1}$ satisfies\\
$$0\in \partial_xQ(x_{k+1},\hat{y}_k)+\nabla f( \hat{x}_k)+\nabla{\phi _1}(x_{k+1})-\nabla{\phi _1}(\hat{x}_k),$$
which implies that\\
\begin{equation}
\label{(3.23)}\
\aligned
\nabla f(x_{k+1})-\nabla f( \hat{x}_k)-\nabla{\phi _1}(x_{k+1})+\nabla{\phi _1}(\hat{x}_k)
&\in \partial_xQ(x_{k+1},\hat{y}_k)+\nabla f( x_{k+1})\\
&= \partial_xL(x_{k+1},\hat{y}_k).
\endaligned
\end{equation}
Similarly, by $y$-subproblem in iterative scheme \eqref{(3.1)}, we have\\
$$0\in \partial_yQ(x_{k+1},y_{k+1})+\nabla g( \hat{y}_k)+\nabla{\phi _2}(y_{k+1})-\nabla{\phi _2}(\hat{y}_k),$$
which implies that\\
\begin{equation}
\label{(3.24)}\
\aligned
\nabla g(y_{k+1})-\nabla g( \hat{y}_k)-\nabla{\phi _2}(y_{k+1})+\nabla{\phi _2}(\hat{y}_k)
&\in \partial_yQ(x_{k+1},y_{k+1})+\nabla g( y_{k+1})\\
&= \partial_yL(x_{k+1},y_{k+1}).
\endaligned
\end{equation}
From Assumption \ref{Assumption32} (iii), we know that\\
$$\partial_xQ(x,y)=\nabla _xq(x,y)+\partial h(x).$$
According to \eqref{(3.23)}, we have
\begin{equation}
\label{(3.27)}\
\aligned
q_{x}^{k+1}\in \partial_xL(x_{k+1},\hat{y}_k)
&=\partial_xQ(x_{k+1},\hat{y}_k)+\nabla f( x_{k+1})\\
&=\nabla _xq(x_{k+1},\hat{y}_k)+\partial h(x_{k+1})+\nabla f( x_{k+1}).
\endaligned
\end{equation}
It follows from \eqref{(3.25)} that
\begin{equation}
\label{(3.28)}\
\aligned
p_{x}^{k+1} &=\nabla_xq(x_{k+1},y_{k+1})-\nabla_xq(x_{k+1},\hat{y}_k)+q_{x}^{k+1}\\
&\in \nabla_xq(x_{k+1},y_{k+1})+\partial h(x_{k+1})+\nabla f( x_{k+1})\\
&= \partial_xQ(x_{k+1},y_{k+1})+\nabla f( x_{k+1})\\
&= \partial_xL(x_{k+1},y_{k+1}).
\endaligned
\end{equation}
Hence,\\
$$(p_{x}^{k+1},p_{y}^{k+1})\in \partial_xL(x_{k+1},y_{k+1})\times \partial_yL(x_{k+1},y_{k+1})\subset \partial L(x_{k+1},y_{k+1}).$$
Now we begin to estimate the norms of $p_{x}^{k+1}$ and $\ p_{y}^{k+1}$. Under Assumption \ref{Assumption32} (i) that $L$ is coercive, we deduce that $\left \{  (x_{k+1}, y_{k+1} )\right \}$ is a bounded set. Then from Assumption \ref{Assumption32} (iii) and \eqref{(33.2)}, we have\\
\begin{equation*}
\aligned
\left \|  p_{x}^{k+1} \right \| &\le \left \| \nabla_xq(x_{k+1},y_{k+1})-\nabla_xq(x_{k+1},\hat{y}_k) \right \| +\left \|  q_{x}^{k+1} \right \| \\
&\le \xi \left \| y_{k+1}-\hat{y}_k \right \|+\left \|  \nabla f(x_{k+1})-\nabla f( \hat{x}_k)\right \| +\left \| \nabla{\phi _1}(\hat{x}_k) -\nabla{\phi _1}(x_{k+1})\right \| \\
&\le \xi \left \| y_{k+1}-\hat{y}_k \right \|+L_{\nabla f}\left \| x_{k+1}-\hat{x}_k\right \| +\eta _1\left \| x_{k+1}-\hat{x}_k\right \| \\
&= \xi \left \| y_{k+1}-\hat{y}_k \right \|+\left (L_{\nabla f}+\eta _1\right )\left \| x_{k+1}-\hat{x}_k\right \|,\\
\left \|  p_{y}^{k+1} \right \| 
&\le \left \|  \nabla g(y_{k+1})-\nabla g( \hat{y}_k)\right \| +\left \| \nabla{\phi _2}(\hat{y}_k) -\nabla{\phi _2}(y_{k+1})\right \| \\
&\le L_{\nabla g}\left \| y_{k+1}-\hat{y}_k\right \| +\eta _2\left \| y_{k+1}-\hat{y}_k\right \| \\
&= \left (L_{\nabla g}+\eta _2\right )\left \| y_{k+1}-\hat{y}_k\right \|,
\endaligned
\end{equation*}
and hence\\
\begin{equation*}
\aligned
\left \|  (p_{x}^{k+1},p_{y}^{k+1}) \right \| ^2&= \left \|  p_{x}^{k+1}\right \| ^2+\left \|p_{y}^{k+1}\right \| ^2\\
&\le 2\left (L_{\nabla f}+\eta _1\right )^2\left \| x_{k+1}-\hat{x}_k\right \|^2+2\xi^2 \left \| y_{k+1}-\hat{y}_k \right \|^2+\left (L_{\nabla g}+\eta _2\right )^2\left \| y_{k+1}-\hat{y}_k\right \|^2\\
&= 2\left (L_{\nabla f}+\eta _1\right )^2\left \| x_{k+1}-\hat{x}_k\right \|^2+2\left [ \xi^2 +\left (L_{\nabla g}+\eta _2\right )^2\right ] \left \| y_{k+1}-\hat{y}_k \right \|^2.
\endaligned
\end{equation*}
The above inequality holds from the fact that $(a+b)^2\le 2(a^2+b^2),\forall a,b\in \mathbb{R}.$ 
\par Set $\varrho^{2} =\max\left \{ 2\left (L_{\nabla _f}+\eta _1\right )^2 ,2\xi^2 +2\left (L_{\nabla _g}+\eta _2\right )^2\right \} $. Then\\
\begin{equation*}
\aligned
&\left \|  (p_{x}^{k+1},p_{y}^{k+1}) \right \| ^2\le \varrho^{2}\left ( \left \| x_{k+1}-\hat{x}_k\right \|^2+\left \| y_{k+1}-\hat{y}_k \right \|^2 \right )  \\
&\left \|  (p_{x}^{k+1},p_{y}^{k+1}) \right \|\le \varrho\left \| \left (x_{k+1}-\hat{x}_k,y_{k+1}-\hat{y}_k\right ) \right \|.
\endaligned
\end{equation*}
So the conclusion holds.
\end{proof}
Below, we would summarize some properties about cluster points and prove every cluster point of a sequence generated by Algorithm \ref{alg1} is the critical point of $L$. For simplicity, we introduce the following notations. Let\\
$$z_k=(x_k,y_k),\ \omega _k=(\hat{x}_k,\hat{y}_k).$$
So $L(z_k)=L(x_k,y_k), L(\omega _k)=L(\hat{x}_k,\hat{y}_k).$
\par Let $\{z_k\}$ be the sequence generated by Algorithm \ref{alg1} with initial point $z_0$. Under Assumption \ref{Assumption32} (i) that $L$ is coercive, we deduce that $\{z_k\}$ is a bounded set, and it has at least one cluster point. The set of all cluster points is denoted by $\mathcal{L}(z_0)$, i.e.,
$$\mathcal{L}(z_0):=\{\hat{z}=(\hat{x},\hat{y})\in \mathbb{R}^n\times \mathbb{R}^m: \exists {\rm \ strictly\ increasing} \left \{ k_j \right \} _{j\in \mathbb{N}} 
{\rm\ such\ that\ } z_{k_j}\to\hat{ z}, j\to \infty \}.$$
\begin{lemma}
\label{lem34}
{\it Suppose Assumption \ref{Assumption31} and Assumption \ref{Assumption32} hold, let $\left \{  z _k\right \}$ be a sequence generated by Algorithm \ref{alg1} with initial point $z_0$. Then the following results hold.

{\rm (i)} $\mathcal{L}(z_0)$ is a nonempty compact set, and $L$ is finite and constant on $\mathcal{L}(z_0)$,

{\rm (ii)} $\mathcal{L}(z_0)\subset {\rm crit} \ L$,

{\rm (iii)} $\lim_{k \to \infty} {\rm dist}\left ( z_k,\mathcal{L}(z_0) \right ) =0$.
}
\end{lemma}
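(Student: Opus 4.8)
The plan is to establish the three claims in order, relying on the monotonicity and summability of Lemma \ref{lem31}, the subgradient estimate of Lemma \ref{lem32}, the closedness of the limiting subdifferential (Remark \ref{rem21}(b)), and the continuity of $L$ on its domain (Remark \ref{rem31}(iii)). For part (i), I would first note that coercivity of $L$ (Assumption \ref{Assumption32}(i)) combined with the monotone decrease $L(z_k)\le L(z_0)$ from Lemma \ref{lem31}(i) confines $\{z_k\}$ to a bounded sublevel set, so by Bolzano--Weierstrass the cluster set $\mathcal{L}(z_0)$ is nonempty. Being the set of subsequential limits of a bounded sequence, $\mathcal{L}(z_0)$ is both closed and bounded, hence compact. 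To see that $L$ is constant there, take any $\hat z\in\mathcal{L}(z_0)$ with $z_{k_j}\to\hat z$; since ${\rm dom}\,Q$ is closed, $\hat z\in{\rm dom}\,L$, and continuity of $L$ gives $L(z_{k_j})\to L(\hat z)$. But $L(z_{k_j})\to L^\ast$ by Lemma \ref{lem31}(i), so $L(\hat z)=L^\ast$ for every $\hat z\in\mathcal{L}(z_0)$, which shows $L$ is finite and constant on $\mathcal{L}(z_0)$.

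For part (ii), fix $\hat z=(\hat x,\hat y)\in\mathcal{L}(z_0)$ and a subsequence $z_{k_j}\to\hat z$. Applying Lemma \ref{lem32} at index $k_j$ (with $k=k_j-1$) produces $(p_x^{k_j},p_y^{k_j})\in\partial L(z_{k_j})$ satisfying $\|(p_x^{k_j},p_y^{k_j})\|\le\varrho\,\|(x_{k_j}-\hat x_{k_j-1},\,y_{k_j}-\hat y_{k_j-1})\|$, and the right-hand side tends to $0$ by Lemma \ref{lem31}(ii). Hence $(p_x^{k_j},p_y^{k_j})\to 0$, while $z_{k_j}\to\hat z$ and $L(z_{k_j})\to L(\hat z)$ as established above. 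Invoking the closedness of $\partial L$ (Remark \ref{rem21}(b)), I conclude $0\in\partial L(\hat z)$, that is, $\hat z\in{\rm crit}\,L$.

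For part (iii), I would argue by contradiction. If ${\rm dist}(z_k,\mathcal{L}(z_0))\not\to 0$, then there exist $\varepsilon>0$ and a subsequence with ${\rm dist}(z_{k_j},\mathcal{L}(z_0))\ge\varepsilon$ for all $j$. Boundedness of $\{z_k\}$ lets me extract a further convergent sub-subsequence whose limit is, by definition, a point of $\mathcal{L}(z_0)$; this forces the distance along that sub-subsequence to zero, contradicting the lower bound $\varepsilon$.

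The step I expect to demand the most care is part (ii): the closedness property of the limiting subdifferential requires not only $z_{k_j}\to\hat z$ and $(p_x^{k_j},p_y^{k_j})\to 0$ but also the function-value convergence $L(z_{k_j})\to L(\hat z)$. This is precisely where the continuity of $L$ on its closed domain (Remark \ref{rem31}(iii)) is indispensable; for a merely lower semicontinuous $L$ this convergence, and hence the conclusion $0\in\partial L(\hat z)$, could fail. Verifying this hypothesis and the attendant fact that $\hat z\in{\rm dom}\,L$ is thus the crux of the argument.
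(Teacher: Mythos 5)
Your proposal is correct and follows essentially the same route as the paper's proof: boundedness from coercivity plus monotone decrease for (i), the subgradient bound of Lemma \ref{lem32} combined with Lemma \ref{lem31}(ii) and the closedness of $\partial L$ for (ii), and the same contradiction argument for (iii). Your explicit verification that $\hat z\in{\rm dom}\,L$ and that $L(z_{k_j})\to L(\hat z)$ via continuity on the closed domain is exactly the point the paper relies on in Remark \ref{rem31}(iii).
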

\begin{proof}
{\rm (i)} The fact $\left \{  z _k\right \}$  is bounded yields $\mathcal{L}(z_0)$ is nonempty. In addition, $\mathcal{L}(z_0)$ can be reformulated as an intersection of compact sets
$$\mathcal{L}(z_0)=\bigcap_{s\in \mathbb{N}}\overline{\bigcup_{k\ge s}{z_k}} ,$$
which illustrates that $\mathcal{L}(z_0)$ is a compact set.

For any $\hat{z}=(\hat{x},\hat{y})\in \mathcal{L}(z_0)$, there exists a subsequence $\left \{  z _{k_j}\right \}$ such that
$$\lim_{j \to \infty} z_{k_j}=\hat{z} .$$
Since $L$ is continuous, we have
$$\lim_{j \to \infty} L(z_{k_j})=L(\hat{z} ).$$
According to Lemma \ref{lem31}, we know that $\left \{  L(z _k)\right \}$ converges to $L^\ast $ globally. Hence
\begin{equation}
\label{(3.29)}
\lim_{j \to \infty} L(z_{k_j})=\lim_{k \to \infty} L(z_{k})=L(\hat{z} )=L^\ast.
\end{equation}
which means $L$ is a constant on $\mathcal{L}(z_0)$.

{\rm (ii)}  Take $\hat{z}\in \mathcal{L}(z_0)$, then $\exists \ \left \{ (x_{k_j},y_{k_j}) \right \}$ such that $(x_{k_j},y_{k_j})\to \hat{z}$. From \eqref{(3.20)}, we have
\begin{equation*}
\lim_{j \to \infty} \left \|x _{k_j}-\hat{x}_{k_{j-1}}\right \|=0,\   \  \  \ \lim_{j \to \infty} \left \|y_{k_j}-\hat{y}_{k_{j-1}}\right \|=0,
\end{equation*}
hence,
\begin{equation*}
\lim_{j \to \infty} x _{k_j}=\lim_{j \to \infty} \hat{x}_{k_{j-1}}=\hat{x},\   \  \  \ \lim_{j \to \infty} y _{k_j}=\lim_{j \to \infty} \hat{y}_{k_{j-1}}=\hat{y}.
\end{equation*}
By \eqref{(3.26)}, we get
\begin{equation*}
\left \|(p_{x}^{k_j},p_{y}^{k_j})  \right \| \le \varrho\left \|(x _{k_j}-\hat{x}_{k_{j-1}},y_{k_j}-\hat{y}_{k_{j-1}})  \right \|.
\end{equation*}
So
$$(p_{x}^{k_j},p_{y}^{k_j})\to (0,0) \ \ \ as \ \ \ j\to \infty .$$\\
Based on the result $\lim_{j \to \infty} L(x_{k_j},y_{k_j})=L(\hat{x} ,\hat{y})$, $(p_{x}^{k_j},p_{y}^{k_j})\in \partial L(x_{k_j},y_{k_j})$ and the closedness property of $\partial L$, we conclude that $(0,0)\in \partial L(\hat{x},\hat{y})$, which means $\hat{z}=(\hat{x},\hat{y})$ is a critical point of $L$, and $\mathcal{L}(z_0)\subset {\rm crit} \ L$.

{\rm (iii)} We prove the assertion by contradiction. Assume that $\lim_{k \to \infty} {\rm dist}\left ( z_k,\mathcal{L}(z_0) \right ) \ne 0$. Then, there exists a subsequence $\left \{  z _{k_m}\right \}$ and a constant $M>0$ such that
\begin{equation}
\label{(3.30)}
\left \| z_{k_m} -\hat{z} \right \| \ge {\rm dist}\left ( z _{k_{m}},\mathcal{L}(z_0) \right )>M,\ \ \ \forall \hat{z}\in \mathcal{L}(z_0).
\end{equation}
On the other hand, $\left \{  z _{k_m}\right \}$ is bounded and has a subsequence $\left \{  z _{k_{m_j}}\right \}$ converging to a point in $\mathcal{L}(z_0)$. Thus,
$$\lim_{j \to \infty} {\rm dist}\left ( z _{k_{m_j}},\mathcal{L}(z_0) \right ) = 0,$$
which is a contradiction to \eqref{(3.30)}.
\end{proof}
\par Now, we can prove the main convergence results of proposed algorithms under K{\L} property.
\begin{theorem}
\label{th31}
{\it  Suppose Assumptions \ref{Assumption31} and \ref{Assumption32} hold and $\left \{  z _{k}\right \}$ is a sequence generated by Algorithm \ref{alg1} with initial point $z_0$. Assume that $L$ is a K{\L} function. Then the following  results hold.

{\rm (i)} $\sum_{k=0}^{\infty  } \left \| z_{k+1}-z_k \right \|<\infty $,

{\rm (ii)} The sequence $\left \{  z _{k}\right \}$ converges to a critical point of $L$.
}
\end{theorem}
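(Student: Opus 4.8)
The plan is to run the by-now standard Kurdyka--{\L}ojasiewicz descent scheme, but with the extra bookkeeping forced by the two-step inertia. Throughout write $d_{k+1}=\left\|(x_{k+1}-\hat{x}_k,y_{k+1}-\hat{y}_k)\right\|$ and $s_k=\|z_{k+1}-z_k\|$. The three ingredients are already available: Lemma \ref{lem31} supplies the sufficient decrease $L(z_k)-L(z_{k+1})\ge\rho\,d_{k+1}^2$, Lemma \ref{lem32} supplies the relative-error bound ${\rm dist}(0,\partial L(z_k))\le\varrho\,d_k$, and Lemma \ref{lem34} shows that $\mathcal{L}(z_0)$ is nonempty and compact, that $L\equiv L^\ast$ on it, and that ${\rm dist}(z_k,\mathcal{L}(z_0))\to0$. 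First I would apply the uniformized K{\L} property (Lemma \ref{lem21}) with $\Omega=\mathcal{L}(z_0)$, obtaining $\varepsilon,\eta>0$ and a desingularizing $\varphi$ valid for all $k\ge K_0$, where $K_0$ is large enough that $z_k$ lies in the $\varepsilon$-tube around $\Omega$ and $L^\ast<L(z_k)<L^\ast+\eta$ (both hold eventually by Lemma \ref{lem34}(iii) and the monotone convergence $L(z_k)\downarrow L^\ast$). If $L(z_{k_0})=L^\ast$ for some $k_0$, monotonicity forces $L(z_k)\equiv L^\ast$ and hence $d_{k+1}=0$ for $k\ge k_0$, a case that feeds harmlessly into the transfer step below; so I assume $L(z_k)>L^\ast$ for all $k$.

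The core estimate is to prove $\sum_k d_k<\infty$. Concavity of $\varphi$ gives
\begin{equation*}
\varphi(L(z_k)-L^\ast)-\varphi(L(z_{k+1})-L^\ast)\ge\varphi'(L(z_k)-L^\ast)\bigl(L(z_k)-L(z_{k+1})\bigr),
\end{equation*}
and combining the K{\L} inequality $\varphi'(L(z_k)-L^\ast)\ge 1/{\rm dist}(0,\partial L(z_k))\ge 1/(\varrho\,d_k)$ with the decrease $L(z_k)-L(z_{k+1})\ge\rho\,d_{k+1}^2$ yields, writing $\Delta_k$ for the left-hand difference,
\begin{equation*}
d_{k+1}^2\le\frac{\varrho}{\rho}\,d_k\,\Delta_k .
\end{equation*}
By the inequality $2\sqrt{uv}\le u+v$ this becomes $2d_{k+1}\le d_k+\tfrac{\varrho}{\rho}\Delta_k$. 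Summing over $k\ge K_0$ and telescoping $\sum_k\Delta_k\le\varphi(L(z_{K_0})-L^\ast)<\infty$, the extra copy of $\sum d_k$ produced on the left by the index shift is absorbed, leaving $\sum_{k}d_k<\infty$.

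It remains to pass from $\sum_k d_k<\infty$ to statement (i). Because the extrapolation acts with the same scalars on both blocks, whenever the inertial step is accepted $\hat{z}_k-z_k=\alpha_{k-1}(z_k-z_{k-1})+\beta_{k-1}(z_{k-1}-z_{k-2})$ (and $\hat{z}_k=z_k$ otherwise), so $\|\hat{z}_k-z_k\|\le\alpha_{\max}s_{k-1}+\beta_{\max}s_{k-2}$. The triangle inequality applied to $z_{k+1}-z_k=(z_{k+1}-\hat{z}_k)+(\hat{z}_k-z_k)$ then gives
\begin{equation*}
s_k\le d_{k+1}+\alpha_{\max}\,s_{k-1}+\beta_{\max}\,s_{k-2}.
\end{equation*}
Summing over $k$ and reindexing, the partial sums satisfy $\sum_k s_k\le\sum_k d_{k+1}+(\alpha_{\max}+\beta_{\max})\sum_k s_k+C_0$ with $C_0$ collecting finitely many boundary terms; since $\alpha_{\max}+\beta_{\max}<1$ the positive factor $1-\alpha_{\max}-\beta_{\max}$ can be divided out to conclude $\sum_k s_k<\infty$, which is (i). Finally, summability of $s_k=\|z_{k+1}-z_k\|$ makes $\{z_k\}$ a Cauchy sequence in the finite-dimensional space $\mathbb{R}^n\times\mathbb{R}^m$, hence convergent; its limit is then a cluster point and therefore a critical point of $L$ by Lemma \ref{lem34}(ii), which proves (ii).

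I expect the main obstacle to be precisely this transfer step: the decrease and error bounds are controlled by $d_k=\|z_k-\hat{z}_{k-1}\|$ rather than by $s_k=\|z_{k+1}-z_k\|$, so the inertial displacement must be reabsorbed after the K{\L} recursion has already delivered $\sum_k d_k<\infty$. This is exactly where the standing assumption $\alpha_{\max}+\beta_{\max}<1$ is essential, and it is the genuinely new element relative to the non-inertial ASAP convergence proof. Care is also needed to keep the indices in the K{\L} recursion aligned (error bound at step $k$, decrease at step $k+1$) so that the telescoping closes.
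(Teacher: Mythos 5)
Your proposal is correct and follows essentially the same route as the paper's proof: uniformized K{\L} property on $\mathcal{L}(z_0)$, the concavity/relative-error/sufficient-decrease recursion $d_{k+1}^2\le\frac{\varrho}{\rho}d_k\Delta_k$ closed by $2\sqrt{uv}\le u+v$ and telescoping to get $\sum_k\|z_{k+1}-\omega_k\|<\infty$, then the triangle inequality with the inertial displacement and $\alpha_{\max}+\beta_{\max}<1$ to absorb the extra sums and obtain $\sum_k\|z_{k+1}-z_k\|<\infty$, and finally the Cauchy argument combined with Lemma \ref{lem34}(ii). The only differences are cosmetic bookkeeping (your indexing $\hat{z}_k-z_k=\alpha_{k-1}(z_k-z_{k-1})+\beta_{k-1}(z_{k-1}-z_{k-2})$ is in fact the more careful reading of the update rule than the paper's \eqref{(3.36)}).
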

\begin{proof}
In the process of our proof, we always assume $L(z_{k})\ne L(\hat{z} )$. Otherwise, there exists an integer $\hat{k}$ such that $L(z_{\hat{k}})= L(\hat{z} )$. The sufficient decrease condition (Lemma \ref{lem31}) implies $z_{\hat{k}+1}= z_{\hat{k}}$. It follows that $z_{\hat{k}}=\hat{z}$ for any $k\ge\hat{k}$ and the assertions holds trivially.

{\rm (i)} Since $\left \{  L(z _{k})\right \}$ is a nonincreasing sequence and $\lim_{k \to \infty} L(z_{k})=L(\hat{z} )=L^\ast$ from \eqref{(3.29)}, for any $\eta>0$, there exists a positive integer $k_0$ such that
$$L(\hat{z} )<L(z_{k})<L(\hat{z} )+\eta, \ \ \ \forall k>k_0,$$
which means
$$z_k\in\left [L(\hat{z} )<L(z_{k})<L(\hat{z} )+\eta  \right ], \ \ \ \forall k>k_0.$$
On the other hand, $\lim_{k \to \infty} {\rm dist}\left ( z_k,\mathcal{L}(z_0) \right ) =0$. Therefore for any $\varepsilon >0$, there exists a positive integer $k_1$, such that
$${\rm dist}\left ( z_k,\mathcal{L}(z_0) \right )<\varepsilon, \ \ \ \forall k>k_1.$$
Let $l=\max\left \{ k_0,k_1 \right \}+1 $. Then for all $k\ge l$, we have
$$z_k\in \left \{ z\mid {\rm dist}\left ( z_k,\mathcal{L}(z_0) \right )<\varepsilon  \right \} \bigcap \left [L(\hat{z} )<L(z_{k})<L(\hat{z} )+\eta  \right ].$$
Note that $L$ is a constant on the compact set $\mathcal{L}(z_0)$. According to Lemma \ref{lem21}, there exists a concave function $\varphi  \in \Phi_\eta  $ such that
\begin{equation}
\label{(3.31)}
\varphi'\left ( L(z_{k})-L(\hat{z} ) \right )  {\rm dist}(0,\partial L(z_{k}))\geq 1, \ \ \ \forall k>l.
\end{equation}
From Lemma \ref{lem32}, we obtain that
\begin{equation}
\label{(3.32)}\
\aligned
{\rm dist}(0,\partial L(z_{k}))\le\left \|  (p_{x}^{k},p_{y}^{k}) \right \|
&\le \varrho\left \| \left (x_{k}-\hat{x}_{k-1},y_{k}-\hat{y}_{k-1}\right ) \right \|\\
&=\varrho\left \|z_{k}-\omega _{k-1}\right \|.
\endaligned
\end{equation}
Substituting \eqref{(3.32)} into \eqref{(3.31)}, we get
$$\varphi'(L(z_{k})-L(\hat{z} ))\ge \frac{1}{{\rm dist}(0,\partial L(z_{k}))} \ge \frac{1}{\varrho\left \|z_{k}-\omega _{k-1}\right \|}. $$
From the concavity of $\varphi$, we have
$$\varphi(L(z_{k+1})-L(\hat{z} ))\le \varphi(L(z_{k})-L(\hat{z} ))+\varphi'(L(z_{k})-L(\hat{z} ))(L(z_{k+1})-L(z_{k})).$$
It follows that
\begin{equation}
\label{(3.33)}\
\aligned
\varphi(L(z_{k})-L(\hat{z} ))-\varphi(L(z_{k+1})-L(\hat{z} ))
&\ge\varphi'(L(z_{k})-L(\hat{z} ))(L(z_{k})-L(z_{k+1}))\\
&\ge\frac{\rho\left \|z_{k+1}-\omega _{k}\right \|^2}{\varrho\left \|z_{k}-\omega _{k-1}\right \|}.
\endaligned
\end{equation}
The last inequality is from Lemma \ref{lem31}.
\par For convenience, we define $\triangle _k=\varphi(L(z_{k})-L(\hat{z} ))$. It is obvious that $\triangle _k$ is nonincreasing of $k$. Let $\underline{\triangle} =\inf_k\left \{ \triangle _k \right \} $ and $C=\frac{\varrho}{\rho} $. Then \eqref{(3.33)} can be simplified as
$$\triangle _k-\triangle _{k+1}\ge\frac{\left \|z_{k+1}-\omega _{k}\right \|^2}{C\left \|z_{k}-\omega _{k-1}\right \|},$$
i.e.,
$$\left \|z_{k+1}-\omega _{k}\right \|^2\le C(\triangle _k-\triangle _{k+1})\left \|z_{k}-\omega _{k-1}\right \|.$$
Using the fact that $2\sqrt{ab} \le a+b$ for $a,b\ge0$, we infer
\begin{equation}
\label{(3.34)}
2\left \|z_{k+1}-\omega _{k}\right \|\le C(\triangle _k-\triangle _{k+1})+\left \|z_{k}-\omega _{k-1}\right \|.
\end{equation}
Summing up \eqref{(3.34)} for $k=l+1,\dots ,K$ yields
\begin{equation*}
\aligned
2\sum_{k=l+1}^{K} \left \|z_{k+1}-\omega _{k}\right \|
&\le C(\triangle _{l+1}-\triangle _{K+1})+\sum_{k=l+1}^{K} \left \|z_{k}-\omega _{k-1}\right \|\\
&=C(\triangle _{l+1}-\triangle _{K+1})+\left \|z_{l+1}-\omega _{l}\right \|-\left \|z_{K+1}-\omega _{K}\right \|+\sum_{k=l+1}^{K} \left \|z_{k+1}-\omega _{k}\right \|.
\endaligned
\end{equation*}
Eliminating the same terms of the inequality, we have
\begin{equation*}
\aligned
\sum_{k=l+1}^{K} \left \|z_{k+1}-\omega _{k}\right \|
&\le C(\triangle _{l+1}-\triangle _{K+1})+\left \|z_{l+1}-\omega _{l}\right \|-\left \|z_{K+1}-\omega _{K}\right \|\\
&\le C(\triangle _{l+1}-\underline{\triangle})+\left \|z_{l+1}-\omega _{l}\right \|-\left \|z_{K+1}-\omega _{K}\right \|\\
&<\infty.
\endaligned
\end{equation*}
Let $K\to\infty$, we get
\begin{equation}
\label{(3.35)}
\sum_{k=l+1}^{K} \left \|z_{k+1}-\omega _{k}\right \|<\infty.
\end{equation}
Note that $(z_{k+1},\omega _k)=(x_{k+1}-\hat{x}_k,y_{k+1}-\hat{y}_k)$, and investigating the iterative point $\omega _k=(\hat{x}_k,\hat{y}_k)$ in Algorithm \ref{alg1}. If $(\hat{x}_k,\hat{y}_k)$ is generated by \eqref{(3.3)}, then
\begin{equation}
\label{(3.36)}\
\aligned
&(x_{k+1}-\hat{x}_k,y_{k+1}-\hat{y}_k)\\
=&(x_{k+1}-x_k-\alpha_k(x_{k+1}-x_{k})-\beta_k(x_k-x_{k-1}),y_{k+1}-y_k-\alpha_k(y_{k+1}-y_{k})-\beta_k(y_k-y_{k-1}))\\
=&z_{k+1}-z_k-\alpha_k(z_{k}-z_{k-1})-\beta_k(z_{k-1}-z_{k-2}).
\endaligned
\end{equation}
If $(\hat{x}_k,\hat{y}_k)$ is generated by \eqref{(3.4)}, then
$$(x_{k+1}-\hat{x}_k,y_{k+1}-\hat{y}_k)=(x_{k+1}-x_k,y_{k+1}-y_k)=z_{k+1}-z_k.$$
No matter how $(\hat{x}_k,\hat{y}_k)$ is generated, we always have
\begin{equation}
\label{(3.37)}\
\left \|z_{k+1}-\omega _k  \right \| 
\ge\left \|z_{k+1}-z_k  \right \|-\alpha_k\left \|z_{k}-z_{k-1} \right \|-\beta_k\left \|z_{k-1}-z_{k-2}\right \|.
\end{equation}
Summing up \eqref{(3.37)} for $k=l,\dots ,K$ we get
\begin{equation}
\label{(3.38)}\
\aligned
&\sum_{k=l+1}^{K} \left \|z_{k+1}-z_k\right \|-\sum_{k=l+1}^{K}\alpha_k\left \|z_{k}-z_{k-1} \right \|-\sum_{k=l+1}^{K} \beta_k\left \|z_{k-1}-z_{k-2}\right \|\\
\le&\sum_{k=l+1}^{K} \left \|z_{k+1}-\omega _k\right \|
< \infty .
\endaligned
\end{equation}
Note that $\alpha_k\in \left [ 0, \alpha _{\max}\right ],\beta _k\in \left [ 0, \beta _{\max}\right ]$. Let $\overline{\alpha } ={\sup}_k\left \{ \alpha_k \right \} ,\overline{\beta } ={\sup}_k\left \{ \beta_k \right \} $, then $0\le \overline{\alpha }+\overline{\beta } \le \alpha_{\max}+\beta_{\max}<1$, and it holds that
\begin{equation}
\label{(3.39)}\
\aligned
&\sum_{k=l+1}^{K} \left \|z_{k+1}-z_k\right \|-\overline{\alpha }\sum_{k=l+1}^{K}\left \|z_{k}-z_{k-1} \right \|-\overline{\beta }\sum_{k=l+1}^{K} \left \|z_{k-1}-z_{k-2}\right \|\\
\le&\sum_{k=l+1}^{K} \left \|z_{k+1}-z_k\right \|-\sum_{k=l+1}^{K}\alpha_k\left \|z_{k}-z_{k-1} \right \|-\sum_{k=l+1}^{K} \beta_k\left \|z_{k-1}-z_{k-2}\right \|.
\endaligned
\end{equation}
and
\begin{equation}
\label{(3.40)}\
\aligned
&\sum_{k=l+1}^{K} \left \|z_{k+1}-z_k\right \|-\overline{\alpha }\sum_{k=l+1}^{K}\left \|z_{k}-z_{k-1} \right \|-\overline{\beta }\sum_{k=l+1}^{K} \left \|z_{k-1}-z_{k-2}\right \|\\
=&\sum_{k=l+1}^{K} \left \|z_{k+1}-z_k\right \|-\overline{\alpha }\sum_{k=l+1}^{K}\left \|z_{k+1}-z_{k} \right \|-\overline{\alpha }(\left \|z_{l+1}-z_{l} \right \|-\left \|z_{K+1}-z_{K} \right \|)-\overline{\beta }\sum_{k=l+1}^{K} \left \|z_{k+1}-z_{k}\right \|\\
&-\overline{\beta }(\left \|z_{l+1}-z_{l} \right \|+\left \|z_{l}-z_{l-1} \right \|-\left \|z_{K+1}-z_{K} \right \|-\left \|z_{K}-z_{K-1} \right \|)\\
=&(1-\overline{\alpha }-\overline{\beta })\sum_{k=l+1}^{K} \left \|z_{k+1}-z_k\right \|-(\overline{\alpha }+\overline{\beta })(\left \|z_{l+1}-z_{l} \right \|-\left \|z_{K+1}-z_{K} \right \|)\\
&-\overline{\beta }(\left \|z_{l}-z_{l-1} \right \|-\left \|z_{K}-z_{K-1} \right \|).
\endaligned
\end{equation}
Combining \eqref{(3.38)}, \eqref{(3.39)} and \eqref{(3.40)}, we have
\begin{equation}
\label{(3.41)}\
\aligned
&(1-\overline{\alpha }-\overline{\beta })\sum_{k=l+1}^{K} \left \|z_{k+1}-z_k\right \|\\
=&(\overline{\alpha }+\overline{\beta })(\left \|z_{l+1}-z_{l} \right \|-\left \|z_{K+1}-z_{K} \right \|)+\overline{\beta }(\left \|z_{l}-z_{l-1} \right \|-\left \|z_{K}-z_{K-1} \right \|)+\sum_{k=l+1}^{K} \left \|z_{k+1}-z_k\right \|\\&-\overline{\alpha }\sum_{k=l+1}^{K}\left \|z_{k}-z_{k-1} \right \|-\overline{\beta }\sum_{k=l+1}^{K} \left \|z_{k-1}-z_{k-2}\right \|\\
\le&(\overline{\alpha }+\overline{\beta })(\left \|z_{l+1}-z_{l} \right \|-\left \|z_{K+1}-z_{K} \right \|)+\overline{\beta }(\left \|z_{l}-z_{l-1} \right \|-\left \|z_{K}-z_{K-1} \right \|)+\sum_{k=l+1}^{K} \left \|z_{k+1}-z_k\right \|\\&-\sum_{k=l+1}^{K}\alpha_k\left \|z_{k}-z_{k-1} \right \|-\sum_{k=l+1}^{K}\beta _k \left \|z_{k-1}-z_{k-2}\right \|\\
\le&(\overline{\alpha }+\overline{\beta })(\left \|z_{l+1}-z_{l} \right \|-\left \|z_{K+1}-z_{K} \right \|)+\overline{\beta }(\left \|z_{l}-z_{l-1} \right \|-\left \|z_{K}-z_{K-1} \right \|)+\sum_{k=l+1}^{K} \left \|z_{k+1}-\omega _k\right \|\\
<&\infty.
\endaligned
\end{equation}
The last inequality holds from \eqref{(3.35)}. Taking the limit as $K\to\infty$, and using the fact $\overline{\alpha }+\overline{\beta }<1$, we obtain
$$\sum_{k=l+1}^{\infty} \left \|z_{k+1}-z_k\right \|<\infty.$$
This shows that
\begin{equation}
\label{(3.42)}\
\sum_{k=0}^{\infty} \left \|z_{k+1}-z_k\right \|<\infty.
\end{equation}

{\rm (ii)}  \eqref{(3.42)} implies that for any $m>n\ge l$, it holds that
$$\left \| z_{m}-z_n \right \|=\left \|\sum_{k=n}^{m-1} (z_{k+1}-z_k)  \right \| \le\sum_{k=n}^{m-1}\left \| z_{k+1}-z_k \right \|< \sum_{k=n}^{\infty}\left \| z_{k+1}-z_k \right \|.$$
Taking $n\to\infty$, we obtain $\left \| z_{m}-z_n \right \|\to 0$ which means that $\left \{  z _{k}\right \}$ is a Cauchy sequence, and hence is a convergent sequence. We also know that $\left \{  z _{k}\right \}$ converges to a critical point of $L$ from Lemma \ref{lem34}(ii).
\end{proof}
Since K{\L} property is also a very useful tool to establish the convergence rate of many first-order methods. Based on K{\L} inequality, Attouch and Bolte \cite{ABB} first established convergence rate results which are related to the desingularizing function for proximal algorithms. Similar to the derivation process of \cite{ABB}, we can obtain convergence rate results as following.
\begin{theorem}
\label{th32}
{\it  (Convergence rate) Let Assumption \ref{Assumption31} and \ref{Assumption32} hold and let $\left \{  z _{k}\right \}$ be a sequence generated by Algorithm \ref{alg1} with $z_0=(x_0,y_0)$ as initial point. Assume also that $L$ is a K{\L} function and the desingularizing function has the form of $\varphi (t)=\frac{C}{\theta }t^{\theta }$ with $\theta \in (0,1]$, $C > 0$. Let $L^{\ast }=L(z)$, $\forall z\in\mathcal{L}(z_0)$. The following assertions hold.

{\rm (i)} If $\theta=1$, the Algorithm \ref{alg1} terminates in finite steps.

{\rm (ii)} If $\theta\in[\frac{1}{2} ,1)$, then there exist $\omega>0$ and $k_0\in \mathbb{N}$ such that
$$L(z_k)-L^{\ast }\le\mathcal{O} \left (  \exp\left ( -\frac{\omega}{\varrho }\right )\right ) , \ \ \ \forall k>k_0.$$

{\rm (iii)} If $\theta\in(0,\frac{1}{2})$, then there exist $\omega>0$ and $k_0\in \mathbb{N}$ such that
$$L(z_k)-L^{\ast }\le\mathcal{O} \left (\left ( \frac{k-k_0}{\varrho }\right ) ^{\frac{-1}{1-2\theta } }\right ), \ \ \ \forall k>k_0.$$
}
\end{theorem}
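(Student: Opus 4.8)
The plan is to adapt the Attouch--Bolte analysis \cite{ABB} and reduce the statement to a single scalar recursion for $r_k := L(z_k) - L^\ast$. By Lemma \ref{lem34}(i), $L\equiv L^\ast$ on $\mathcal{L}(z_0)$, and by Lemma \ref{lem31}(i) the values $L(z_k)$ decrease to $L^\ast$; hence $r_k \ge 0$, the sequence $\{r_k\}$ is nonincreasing, and $r_k \downarrow 0$. Exactly as at the start of the proof of Theorem \ref{th31}, I may assume $r_k > 0$ for every $k$, since otherwise the sufficient-decrease inequality forces the iteration to terminate. Combining the function-value convergence with Lemma \ref{lem34}(iii), for all $k$ beyond some index $l$ the iterate $z_k$ lies in the neighborhood on which the uniformized K{\L} inequality (Lemma \ref{lem21}) is valid.

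The core step is to fuse the three available estimates into one recursion. Sufficient decrease (Lemma \ref{lem31}) gives $r_k - r_{k+1} \ge \rho\|z_{k+1}-\omega_k\|^2$; the subgradient bound (Lemma \ref{lem32}) gives ${\rm dist}(0,\partial L(z_{k+1})) \le \varrho\|z_{k+1}-\omega_k\|$; and with $\varphi(t)=\frac{C}{\theta}t^\theta$, so $\varphi'(t)=Ct^{\theta-1}$, the K{\L} inequality $\varphi'(r_{k+1}){\rm dist}(0,\partial L(z_{k+1}))\ge 1$ yields $\|z_{k+1}-\omega_k\| \ge \frac{1}{C\varrho}\,r_{k+1}^{1-\theta}$. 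Inserting this lower bound into the sufficient-decrease inequality produces the master recursion
\[
r_k - r_{k+1} \ge a\,r_{k+1}^{2(1-\theta)},\qquad a:=\frac{\rho}{C^2\varrho^2}>0,
\]
valid for all $k\ge l$. The dependence of $a$ on $\varrho$ is what carries the factor $\varrho$ into the final rate estimates.

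It then remains to analyse this recursion according to the size of $2(1-\theta)$. For (i), $\theta=1$ makes $\varphi'\equiv C$, so the K{\L} inequality forces ${\rm dist}(0,\partial L(z_k))\ge 1/C$; but Lemma \ref{lem31}(ii) and Lemma \ref{lem32} give ${\rm dist}(0,\partial L(z_k))\le\varrho\|z_k-\omega_{k-1}\|\to 0$, which is impossible unless $z_k$ exits the K{\L} region, i.e. $r_k=0$, giving finite termination. For (ii), $\theta\in[\frac12,1)$ yields $2(1-\theta)\le 1$; since $r_{k+1}\downarrow 0$ we eventually have $r_{k+1}<1$, hence $r_{k+1}^{2(1-\theta)}\ge r_{k+1}$ and the recursion becomes $r_k-r_{k+1}\ge a\,r_{k+1}$, i.e. $r_{k+1}\le(1+a)^{-1}r_k$, a geometric contraction giving the exponential bound. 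For (iii), $\theta\in(0,\frac12)$ gives $p:=2(1-\theta)>1$; here I would use the convex, decreasing map $t\mapsto t^{1-p}$ and its tangent inequality to bound $r_{k+1}^{1-p}-r_k^{1-p}$ from below, then telescope to obtain $r_k^{1-p}\ge \mathrm{const}\cdot(k-l)$ and hence $r_k=\mathcal{O}\big((k-l)^{-1/(p-1)}\big)=\mathcal{O}\big((k-l)^{-1/(1-2\theta)}\big)$.

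I expect case (iii) to be the main obstacle. The tangent inequality for $t\mapsto t^{1-p}$ produces an extra factor $(r_{k+1}/r_k)^p$, which is not automatically bounded away from zero; following \cite{ABB} I would split each index into the two regimes $r_{k+1}^p\ge\frac12 r_k^p$ and $r_{k+1}^p<\frac12 r_k^p$, and check that in the first regime the increment of $t\mapsto t^{1-p}$ is at least $\frac{a(p-1)}{2}$, while in the second regime the jump $r_{k+1}^{1-p}>2^{(p-1)/p}r_k^{1-p}$ produces a uniform positive increment because $r_k^{1-p}\ge r_l^{1-p}>0$; taking the smaller of the two constants and telescoping then yields the polynomial rate. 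A secondary point needing care is to justify that the master recursion is legitimately available for all large $k$, which rests on combining the convergence $L(z_k)\to L^\ast$ with Lemma \ref{lem34}(iii) to keep $z_k$ inside the uniformized K{\L} neighborhood.
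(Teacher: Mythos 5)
Your argument is correct and is essentially the proof the paper intends but omits: Theorem \ref{th32} is stated with the remark that the proof is ``almost the same'' as in \cite{ABB,LL,LYV}, and your derivation --- fusing the sufficient decrease of Lemma \ref{lem31}, the subgradient bound of Lemma \ref{lem32}, and the uniformized K{\L} inequality of Lemma \ref{lem21} into the master recursion $r_k-r_{k+1}\ge a\,r_{k+1}^{2(1-\theta)}$ with $a=\rho/(C^2\varrho^2)$, then splitting into the three cases (contradiction for $\theta=1$, geometric contraction for $\theta\in[\tfrac12,1)$, and the tangent/telescoping argument with the two-regime split for $\theta\in(0,\tfrac12)$) --- is exactly that standard Attouch--Bolte analysis. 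The only mismatch is in the printed statement rather than in your proof: the bounds in (ii) and (iii) as displayed do not depend on $k$ in the right way (e.g.\ $\exp(-\omega/\varrho)$ should be an $\exp(-\omega(k-k_0))$-type decay), and your recursion recovers the rates that were evidently intended.
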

The result is almost the same as it mentioned in \cite{LL,LYV}. We omit the proof here.
\section{Numerical experiments}\label{sect4}
\hspace*{\parindent}In this subsection, we provide some numerical experiments which we carried out in order to illustrate the numerical results of Algorithm \ref{alg1} and \ref{alg2} with different Bregman distances.
The following list are various functions with its Bregman distances:\\
%
\hspace*{\parindent}(i) Define the function $\varphi _1(x)=-\gamma\sum_{i=1}^m \ln x_i$ with domain
$$\text{dom}\varphi _1=\{x=(x_1, x_2,\cdots, x_m)^T\in \mathbb{R}^m: x_i > 0, i =1, 2,\cdots, m\}$$
 and range ran$\varphi _1=(-\infty,+\infty)$. Then
$$\nabla \varphi _1(x)=\gamma(-\frac{1}{x_1}, -\frac{1}{x_2}, \cdots, -\frac{1}{x_m})^T$$
and the Bregman distance (the Itakura-Saito distance) with respect to $\varphi _1 $ is
$$D_{\varphi _1}(x, y) = \gamma\sum_{i=1}^m\big(\frac{x_i}{y_i}-\ln\big(\frac{x_i}{y_i}\big)-1\big),\ \ \forall x,y\in \mathbb{R}_{++}^m.$$
\hspace*{\parindent}(ii) Define the function $\varphi_2(x)=\frac{\gamma}{2}\|x\|^2$ with domain $\text{dom}\phi _2=\mathbb{R}^m$
 and range ran$\varphi_2=[0,+\infty)$. Then $\nabla \varphi_2(x)=x$ and the Bregman distance (the squared Euclidean distance) with respect to $\varphi_2$ is
$$D_{\varphi_2}(x, y) = \frac{\gamma}{2}\|x-y\|^2,\ \ \forall x,y\in \mathbb{R}^m.$$
It  is clear that $\varphi_i$ is $\gamma$-strongly convex ($i=1,2$).\\
\subsection{Nonconvex quadratic programming}
\ \par We consider the following quadratic programming problem\\
\begin{equation}
\aligned
\label{(4.1)}\
\min_{x}&\ \ \frac{1}{2} x^{T}Ax+b^{T}x \\
&s.t. \ \ x\in S,
\endaligned
\end{equation}
where $A$ is a symmetric matrix but not necessarily positive semidefinite, $b\in \mathbb{R}^n$ is a vector and $S \subset\mathbb{R}^n$ is a ball.
By introducing an auxiliary variable $y\in \mathbb{R}^n$, \eqref{(4.1)} can be reformulated as
\begin{equation}
\aligned
\label{(4.2)}\
\min_{x,\ y\in\mathbb{R}^{n}}\ \ &\frac{1}{2} y^{T}Ay+b^{T}y+\iota_S(x) \\
&s.t. \ \ x=y,
\endaligned
\end{equation}
where $\iota_S(x)$ is the indictor function with respect to ball $S$, defined by
$$\iota_S(x)=\left\{\begin{matrix}
  0, & x\in S, \\
  +\infty, &x\notin S.
\end{matrix}\right.$$

We use penalty method to handle the constraint. The problem can be transformed into
\begin{equation}
\label{(4.3)}\
\min_{x,\ y\in\mathbb{R}^{n}}\ \ \frac{1}{2} y^{T}Ay+b^{T}y+\iota_S(x)+\frac{\mu}{2}\left \| x-y \right \|^{2},
\end{equation}
where $\mu>0$ be a penalty parameter. When $\mu$ is large enough, the solution of \eqref{(4.3)} is an approximate solution of problem \eqref{(4.2)}.
\par Let
$$
\aligned
&f(x)\equiv0, \ g(y)=\frac{1}{2} y^{T}Ay+b^{T}y,\\
&Q(x,y)=\iota_S(x)+\frac{\mu}{2}\left \| x-y \right \|^{2}.
\endaligned
$$
Obviously, $f$ and $g$ are smooth functions, and the Lipschitz constant of $\nabla g$, i.e. $L_{\nabla g}$, is the maximal singular value of $A$. Note that when $A$ is not positive semidefinite, $g$ is a nonconvex function. We can solve \eqref{(4.3)} by Algorithm \ref{alg1} and Algorithm \ref{alg2}.
Let $\phi_2(y)=\frac{\lambda }{2} \left \| y\right \|^{2}$, we can elaborate the $x$-subproblem and $y$-subproblem of our algorithms respectively as follows.
\par The $x$-subproblem corresponds to the following optimization problem
$$
\aligned
x_{k+1}
&\in\arg\min_{ x\in \mathbb{R}^n}\left \{ Q(x,\hat{y}_k )+\left \langle \nabla f(\hat{x}_k),x \right \rangle+D_{\phi_1}(x,\hat{x}_k)  \right \} \\
&=\arg\min_{ x\in \mathbb{R}^n}\left \{ \iota_S(x)+\frac{\mu}{2}\left \| x-\hat{y}_k \right \|^{2}+D_{\phi_1}(x,\hat{x}_k)  \right \} \\
&=\arg\min_{ x\in S}\left \{ \frac{\mu}{2}\left \| x-\hat{y}_k \right \|^{2}+D_{\phi_1}(x,\hat{x}_k)  \right \}. \\
\endaligned
$$
The $y$-subproblem corresponds to the following optimization problem
$$
\aligned
y_{k+1}
&\in\arg\min_{ y\in \mathbb{R}^n}\left \{ Q(x_{k+1},y )+\left \langle \nabla g(\hat{y}_k),y \right \rangle+D_{\phi_2}(y,\hat{y}_k) \right \} \\
&=\arg\min_{ y\in \mathbb{R}^n}\left \{ \frac{\mu}{2}\left \|x_{k+1}-y \right \|^{2}+\left \langle  A\hat{y}_k+b,y \right \rangle+\frac{\lambda }{2} \left \| y-\hat y_{k}  \right \|^{2}   \right \}, \\
\endaligned
$$
which has an explicit expression
$$y_{k+1} =\frac{1}{\mu+\lambda } \left (\mu x_{k+1}+\lambda\hat{y}_k - A\hat{y}_k-b \right) .$$
\par In numerical experiments, we set $A=D+D^T\in \mathbb{R}^{n\times n}$, where $D$ is a matrix generated by i.i.d. standard Gaussian entries. The vector $b$ is also generated by i.i.d. standard Gaussian entries. We take $n=500$ and the radius of the ball is $r=2$.
Since $f\equiv 0$, any positive number can be the Lipschitz constant of $L_{\nabla f}$. We set $L_{\nabla f}=L_{\nabla g}$. We selected the starting point randomly, and use
$$E_k=\|x_{k+1}-x_k\|+\|y_{k+1}-y_k\|<10^{-4}$$
as the stopping criteria. In the numerical results, ``Iter." denotes  the number of iterations. ``Time" denotes  the CPU time. ``Extrapolation" records the number of taking extrapolation step, i.e., the number of adopting \eqref{(3.3)}. 


In order to show the effectiveness of the proposed algorithms, we compare Algorithm \ref{alg1}, Algorithm \ref{alg2} with ASAP \cite{NT} and aASAP \cite{XX} for different Bregman distance. Note that when $\alpha_k\equiv \beta_k\equiv 0$, Algorithm \ref{alg1} and Algorithm \ref{alg2} correspond to ASAP. For aASAP, we take $\alpha_k=0.3$. For Algorithm \ref{alg1}, we set $\alpha_k=0.3, \beta_k=0.2$. And we also take extrapolation parameter dynamically updating with $\alpha_k=\beta_k=\frac{k-1}{k+2}$. Even if the theoretical bound of extrapolation parameter $\alpha_k+\beta_k$ with dynamically updating dost not permit to go beyond $1$, for the convergence is also obtained for this case with a better performance. For Algorithm \ref{alg2}, we set $\alpha_0=0.3, \beta_0=0.2$ as the initial extrapolation parameter and $t=1.2, \beta_{\max}=0.499$. We use ``Alg. 1-i" and ``Alg. 2-i" to denote Algorithm \ref{alg1} and Algorithm \ref{alg2} with $\phi_1 (x)=\varphi_i(x)(1\le i\le 2)$, respectively, where extrapolation parameter $\alpha_k=0.3, \beta_k=0.2$. We use ``Alg. 1-i(F)" and ``Alg. 2-i(F)" to denote Algorithm \ref{alg1} and Algorithm \ref{alg2} with $\phi_1 (x)=\varphi_i(x)(1\le i\le 2)$, respectively, where extrapolation parameter $\alpha_k=\beta_k=\frac{k-1}{k+2}$.
\par In Table \ref{table1}, we list the iterations, CPU time and extrapolation step of the above algorithm for different Bregman distance. In Figure \ref{fig_sim}, (a) and (b) reports the result of different extrapolation parameter, respectively, (c) reports the result of different Bregman distance. It can be seen that the Itakura-Saito distance have computational advantage than the squared Euclidean distance for Algorithm \ref{alg1} and Algorithm \ref{alg2} in terms of number of iteration and CPU time. Compared with one-step extrapolation and original algorithm, two-step extrapolation performs much better. It shows that Algorithm \ref{alg2} with adaptive extrapolation parameters performs the best among all algorithms. 

\begin{table}[!ht]
\centering
\caption{Numerical results of different Bregman distance with different extrapolation parameter}\label{table1}
\begin{tabular}{lllllllll}
\hline
           \multicolumn{4}{c}{Itakura-Saito distance} &  & \multicolumn{4}{c}{squared Euclidean distance} \\ \cline{1-4} \cline{6-9}
Algorithm      & Iter.  & Time(s)  & Extrapolation &                 &Algorithm    & Iter.  & Time(s)  & Extrapolation \\ \hline
ASAP        & 192    &0.5906         & \multicolumn{1}{c}{191}   &&ASAP  & 202    &0.6456         &\multicolumn{1}{c}{201}    \\
aASAP       & 138    &0.4127         & \multicolumn{1}{c}{137}   &&aASAP & 147    &0.4590         &\multicolumn{1}{c}{146}    \\
Alg. 1-1     & 81     &0.3169         & \multicolumn{1}{c}{72}    &&Alg. 1-2  & 98     &0.2725          &\multicolumn{1}{c}{96}    \\
Alg. 2-1     & 28     &0.0156         & \multicolumn{1}{c}{26}    &&Alg. 2-2& 33     &0.0469          &\multicolumn{1}{c}{28}    \\
Alg. 1-1(F)  & 44&0.0898& \multicolumn{1}{c}{31}                  &&Alg. 1-2(F) & 48     &0.1013          &\multicolumn{1}{c}{34}    \\ \hline
\end{tabular}
\end{table}
\begin{figure*}[!t]
    \centering
    \subfloat[Itakura-Saito distance]{\includegraphics[width=3.0in]{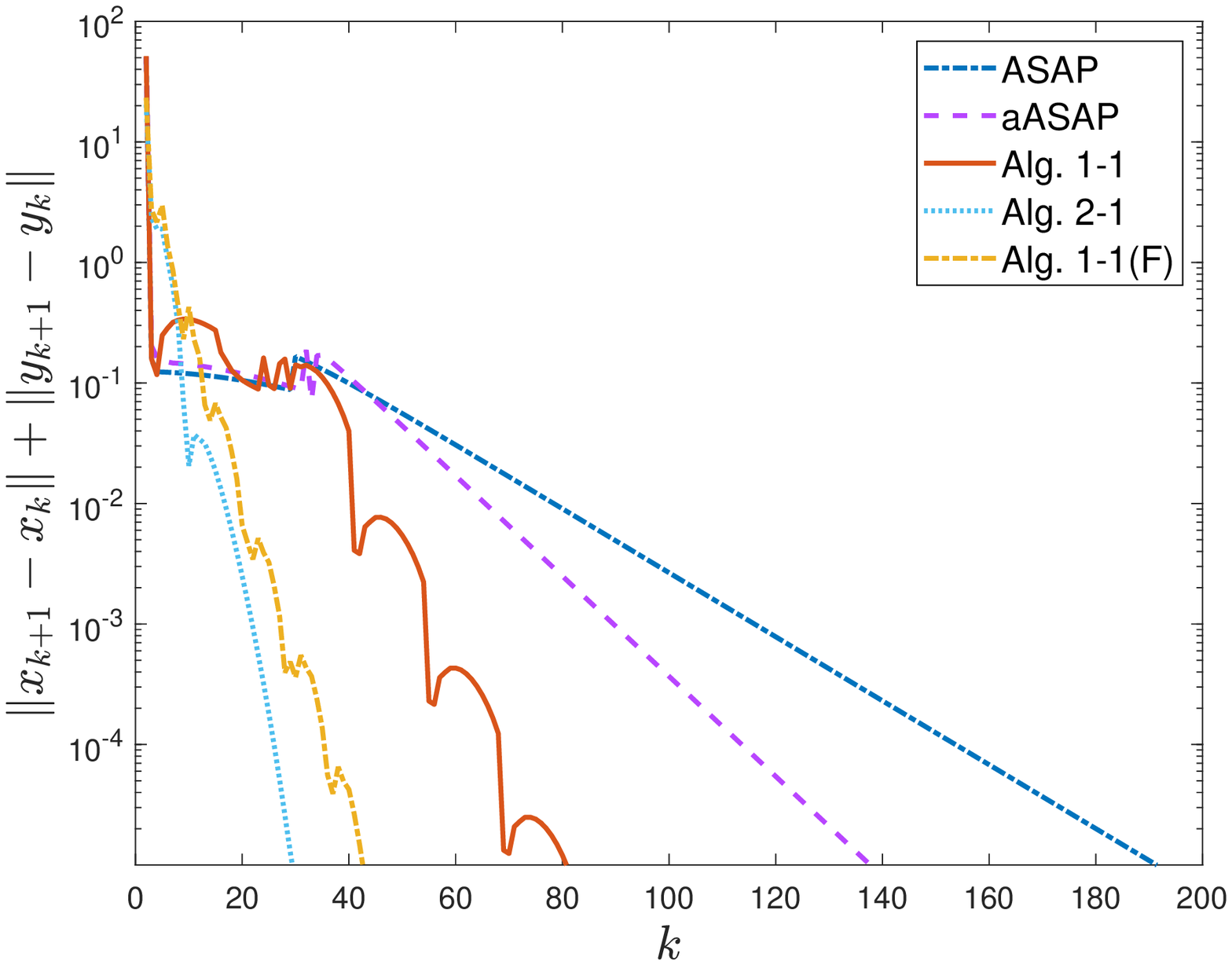}
    \label{fig_first_case}}
\hfil
\subfloat[squared Euclidean distance]{\includegraphics[width=3.0in]{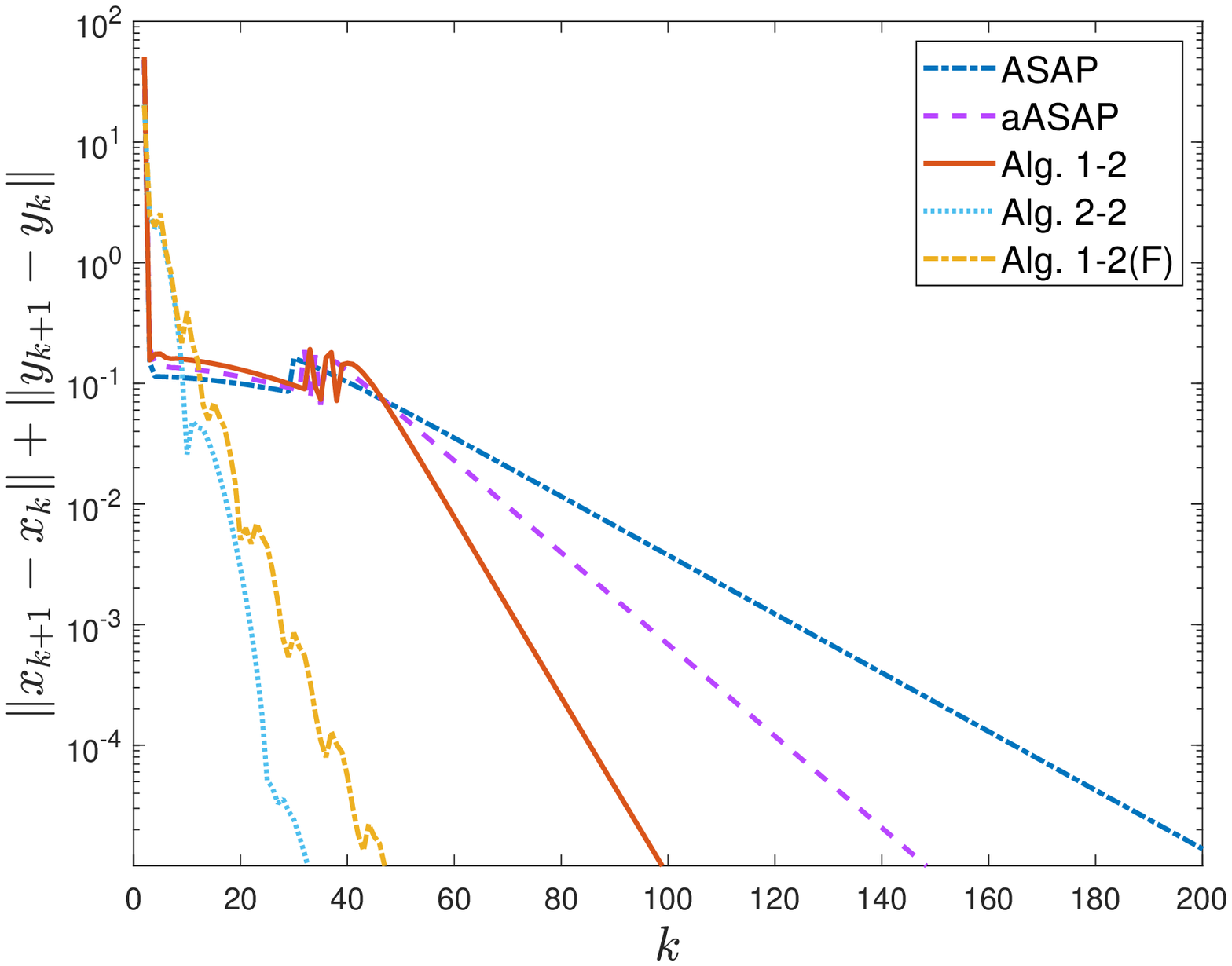}%
    \label{fig_second_case}}
\hfil
\subfloat[Itakura-Saito and squared Euclidean distance]{\includegraphics[width=3.0in]{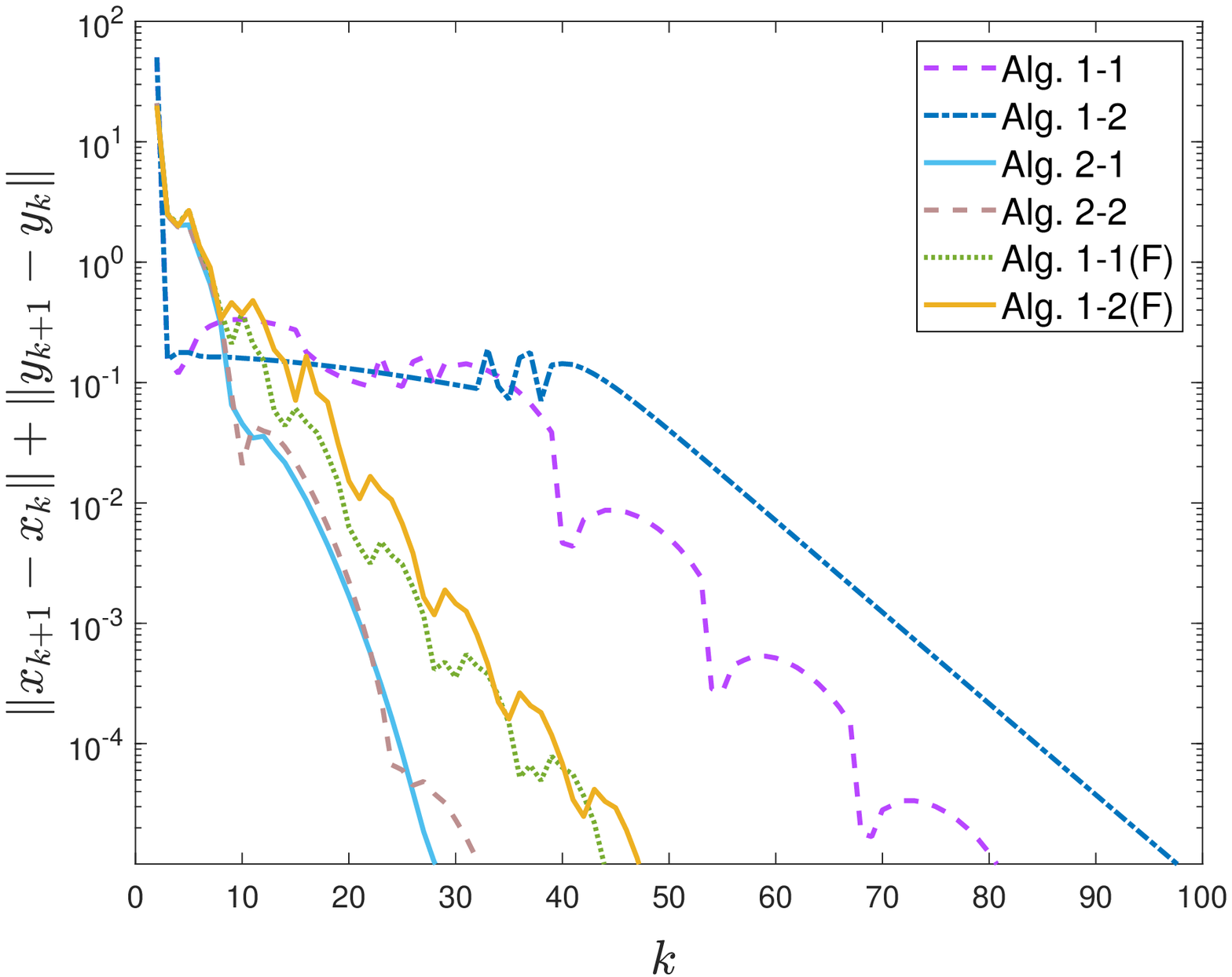}%
    \label{fig_thrid_case}}
    \caption{The value of $\|x_{k+1}-x_k\|+\|y_{k+1}-y_k\|$ versus the iteration numbers for different Bregman distance with different extrapolation parameter.}
    \label{fig_sim}
\end{figure*}

\subsection{Sparse logistic regression}
\ \par In this subsection, we apply our algorithms to solve the sparse logistic regression problem. It is an attractive extension to logistic regression as it can reduce overfitting and perform feature selection simultaneously. We consider the Capped-$l_1$ regularized logistic regression problem \cite{Z}, defined as
\begin{equation}
\label{(4.4)}\
\underset{x\in \mathbb{R}^d}{\min}\ \  \frac{1}{n} \sum_{i=1}^{n} \log(1+\exp(-b_ia_{i}^{T} x))+\lambda\sum_{j=1}^{d}\min(\left | x_j \right |,\theta  ),\ \  \theta >0, 
\end{equation}
where $x\in \mathbb{R}^d$, $a_i\in \mathbb{R}^d$, $b_i\in \left \{ -1,1 \right \}, i=1,\dots,n.$
\par By the similar method as the former example, we transform \eqref{(4.4)} to
\begin{equation}
\aligned
\label{(4.5)}\
\underset{x,y\in \mathbb{R}^d}{\min}\ \  &\frac{1}{n} \sum_{i=1}^{n} \log(1+\exp(-b_ia_{i}^{T} x))+\lambda\sum_{j=1}^{d}\min(\left | y_j \right |,\theta  )\\
&s.t. \ \ x=y,
\endaligned
\end{equation}
which can be described as
\begin{equation}
\label{(4.6)}\
\underset{x,y\in \mathbb{R}^d}{\min}\ \  \frac{1}{n} \sum_{i=1}^{n} \log(1+\exp(-b_ia_{i}^{T} x))+\lambda\sum_{j=1}^{d}\min(\left | y_j \right |,\theta  )+\frac{\mu}{2}\left \| x-y  \right \|^2.
\end{equation}
\par Let
\begin{equation}
\aligned
\label{(4.7)}\
&f(x)=\frac{1}{n} \sum_{i=1}^{n} \log(1+\exp(-b_ia_{i}^{T} x)),\ g(y)\equiv0,\\
&Q(x,y)=\lambda\sum_{j=1}^{d}\min(\left | y_j \right |,\theta  )+\frac{\mu}{2}\left \| x-y  \right \|^2.
\endaligned
\end{equation}
\par It is easy to verify that all the functions satisfy the assumptions. However, when $n$ is large, $L_{\bigtriangledown f} $ is difficult to compute, we cannot determine the strong convexity modulus range of the Bregman function. So we need to use backtracking strategy to solve the problem. Let $\phi_2(y)=\frac{\eta }{2} \left \| y\right \|^{2}$, we can elaborate the $x$-subproblem and $y$-subproblem of our algorithms respectively as follows.
\par The $x$-subproblem in \eqref{(4.6)} corresponds to the following optimization problem
$$
\aligned
x_{k+1}
&\in\arg\min_{ x\in \mathbb{R}^d}\left \{ Q(x,\hat{y}_k )+\left \langle \nabla f(\hat{x}_k),x \right \rangle+D_{\phi_1}(x,\hat{x}_k)  \right \} \\
&=\arg\min_{ x\in \mathbb{R}^d}\left \{ \frac{\mu}{2}\left \| x-\hat{y}_k \right \|^{2}+\left \langle \nabla f(\hat{x}_k),x \right \rangle+D_{\phi_1}(x,\hat{x}_k)  \right \}. \\
\endaligned
$$
The $y$-subproblem in \eqref{(4.6)} corresponds to
$$
\aligned
y_{k+1}
&\in\arg\min_{ y\in \mathbb{R}^d}\left \{ Q(x_{k+1},y)+\left \langle \nabla g(\hat{y}_k),y \right \rangle+D_{\phi_2}(y,\hat{y}_k)  \right \} \\
&=\arg\min_{ y\in \mathbb{R}^d}\left \{ \frac{\mu}{2}\left \| x_{k+1}-y \right \|^{2}+\lambda\sum_{j=1}^{d}\min(\left | y_j \right |,\theta  )+ \frac{\eta}{2}\left \| y-\hat{y}_k \right \|^{2} \right \} \\
&=\arg\min_{ y\in \mathbb{R}^d}\left \{ \lambda\sum_{j=1}^{d}\min(\left | y_j \right |,\theta  )+ \frac{\mu+\eta}{2}\left \| y-\frac{1}{\mu+\eta}(\eta \hat{y}_k+\mu x_{k+1}) \right \|^{2} \right \} \\
&=\arg\min_{ y\in \mathbb{R}^d}\left \{  \frac{\lambda}{\mu+\eta}\sum_{j=1}^{d}\min(\left | y_j \right |,\theta  )+ \frac{1}{2}\left \| y-\frac{1}{\mu+\eta}(\eta \hat{y}_k+\mu x_{k+1}) \right \|^{2} \right \}.
\endaligned
$$
\par For convenience, we set
\begin{equation}
\label{(4.8)}\
h(t,y,u_k)=\frac{\lambda}{t}\sum_{j=1}^{d}\min(\left | y_j \right |,\theta  )+ \frac{1}{2}\left \| y-u_k \right \|^{2}, 
\end{equation}
where $t=\mu+\eta$, $u_k=\frac{1}{\mu+\eta}(\eta \hat{y}_k+\mu x_{k+1}).$
Then $h(t,y,u_k)$ has a separable structure. It can be decomposed into $d$ one-dimensional subproblems. Let $\left ( u_{k} \right )_j$ be the $j$-th component of $u_k$. Then each component of $y_{k+1}$ can be calculated from
\begin{equation}
\label{(4.9)}\
\left ( y_{k+1} \right )_j=\arg\min_{ y_j\in \mathbb{R}}\left \{\frac{1}{2}\left( y_j-\left ( u_{k} \right )_j\right  )^{2}+\frac{\lambda}{t}\min(\left | y_j \right |,\theta  )  \right \},\ j=1,\dots, d,
\end{equation}
where $\left ( y_{k+1} \right )_j$ denotes the $j$-th component of $y_{k+1}$.
Then \eqref{(4.9)} can be described as follows
\begin{equation*}
\left\{
\begin{array}{r@{\extracolsep{0.5ex}}r@{}r@{}r@{}r@{}r@{}r@{}}
 &\tilde{y} _j=\underset{y_j\in \mathbb{R}}{\arg\min}\ \frac{1}{2}\left( y_j-\left ( u_{k} \right )_j\right  )^{2}+\frac{\lambda}{t}\theta  \ \ \  \ \ &\text{ if } \ \left | y_j \right |>\theta,\\
&\hat{y} _j=\underset{y_j\in \mathbb{R}}{\arg\min}\ \frac{1}{2}\left( y_j-\left ( u_{k} \right )_j\right  )^{2}+\frac{\lambda}{t}\left | y_j \right |  \ \ &\text{ if } \ \left | y_j \right |<\theta.              
\end{array}
\right.
\end{equation*}
Furthermore,
\begin{equation*}
\left\{
\begin{array}{r@{\extracolsep{0.5ex}}l@{}l@{}l@{}l@{}l@{}l@{}}
 &\tilde{y} _j&=&\text{sign}(\left( u_{k} \right)_j)\max(\theta , | \left( u_{k} \right)_j | )  \ \ \  \ \ &\text{ if } \ \left | y_j \right |>\theta,\\
&\hat{y} _j&=&\text{sign}(\left( u_{k} \right)_j)\min(\theta ,\max(0, | \left ( u_{k} \right )_j | -\frac{\lambda}{t}))  \ \ &\text{ if } \ \left | y_j \right |<\theta.              
\end{array}
\right.
\end{equation*}
Set $h_j(t,y_j,\left( u_{k} \right)_j)=\frac{\lambda}{t}\min(\left | y_j \right |,\theta  )+ \frac{1}{2}\left \| y_j-\left ( u_{k} \right )_j \right \|^{2}$. Then $\left ( y_{k+1} \right )_j$ can be expressed by
$$\left ( y_{k+1} \right )_j=\begin{cases}
  \tilde{y} _j& \text{ if } h_j(t,\tilde{y} _j,\left ( u_{k} \right )_j)\le h_j(t,\hat{y} _j,\left( u_{k} \right)_j), \\
  \hat{y}_j& \text{ otherwise.}
\end{cases}$$
\par In the experiment, we take $n = 500, d=200$ and the parameters of the problem are set as $\lambda =10^{-3},\theta =0.1\lambda $, which is the same as \cite{GZL}. The backtracking parameters of the algorithm are set as $\rho =2,\delta =10^{-5}$. 
We selected the starting point for all algorithms randomly and use
$$E_k=\|x_{k+1}-x_k\|+\|y_{k+1}-y_k\|<10^{-5}$$
as the stopping criteria. In the numerical results, ``Iter." denotes  the number of iterations. ``Time" denotes  the CPU time. ``Extrapolation" records the number of taking extrapolation step.

In order to show the effectiveness of the proposed algorithms, we compare our algorithms with ASAP \cite{NT} and aASAP \cite{XX}. The extrapolation parameter of aASAP is $\alpha_k=0.3$. In Algorithm \ref{alg1}, we set $\alpha_k=0.3, \beta_k=0.2$. And we also take extrapolation parameter dynamically updating with $\alpha_k=\beta_k=\frac{k-1}{k+2}$. For Algorithm \ref{alg2}, we set $\alpha_0=0.3, \beta_0=0.2$ as the initial extrapolation parameter and $t=1.5, \beta_{\max}=0.499$. We also use ``Alg. 1-i" and ``Alg. 2-i" to denote Algorithm \ref{alg1} and Algorithm \ref{alg2} with $\phi_1 (x)=\varphi_i(x)(1\le i\le 2)$, respectively, where extrapolation parameter $\alpha_k=0.3, \beta_k=0.2$. We use ``Alg. 1-i(F)" and ``Alg. 2-i(F)" to denote Algorithm \ref{alg1} and Algorithm \ref{alg2} with $\phi_1 (x)=\varphi_i(x)(1\le i\le 2)$, respectively, where extrapolation parameter $\alpha_k=\beta_k=\frac{k-1}{k+2}$.

Figure \ref{fig_sim2} shows the performance of different algorithms. We also list the iteration number, the CPU time, and the extrapolation number on the test set of each algorithm in Table \ref{table2}. Figure \ref{fig_sim2} (a) and (b) reports the result of different extrapolation parameter, respectively. It shows that Algorithm \ref{alg2} with adaptive extrapolation parameters performs the best among all algorithms. Figure \ref{fig_sim2} (c) reports the result of different Bregman distance. It shows that the Itakura-Saito distance have computational advantage than the squared Euclidean distance.

We also use BB rule improves the computational efficiency of each algorithm. The symbol in Table \ref{table3} and Figure \ref{fig_sim3} “$\sim$BB” means that the algorithm adopts the BB rule with lower bound of $t_{\min}=1.3$ in backtracking process. It shows that using BB rule to initialize the stepsize can improve the efficiency of all algorithms.

\begin{table}[!ht]
\centering
\caption{Numerical results of different Bregman distance with different extrapolation parameter}\label{table2}
\begin{tabular}{lllllllll}
\hline
           \multicolumn{4}{c}{Itakura-Saito distance} &  & \multicolumn{4}{c}{squared Euclidean distance} \\ \cline{1-4} \cline{6-9}
Algorithm & Iter.  & Time(s)  & Extrapolation &  &Algorithm      & Iter.  & Time(s)  & Extrapolation \\ \hline
ASAP       & 39     &9.3456         & \multicolumn{1}{c}{38}    &  &ASAP& 71     &18.3569         &\multicolumn{1}{c}{70}    \\
aASAP     & 35     &8.3227         & \multicolumn{1}{c}{34}    &  &aASAP& 56     &14.0903         &\multicolumn{1}{c}{55}    \\
Alg. 1-1   & 29     &6.9867         & \multicolumn{1}{c}{21}    &  &Alg. 1-2& 43     &11.0023          &\multicolumn{1}{c}{39}    \\
Alg. 2-1   & 15     &3.4024         & \multicolumn{1}{c}{7}     &  &Alg. 2-2& 23     &6.1576          &\multicolumn{1}{c}{16}    \\
Alg. 1-1(F) & 19&5.5435& \multicolumn{1}{c}{8}     &  &            Alg. 1-2(F)&      33     &7.9012          &\multicolumn{1}{c}{20}    \\ \hline
\end{tabular}
\end{table}

\begin{figure*}[!t]
    \centering
    \subfloat[Itakura-Saito distance]{\includegraphics[width=3.0in]{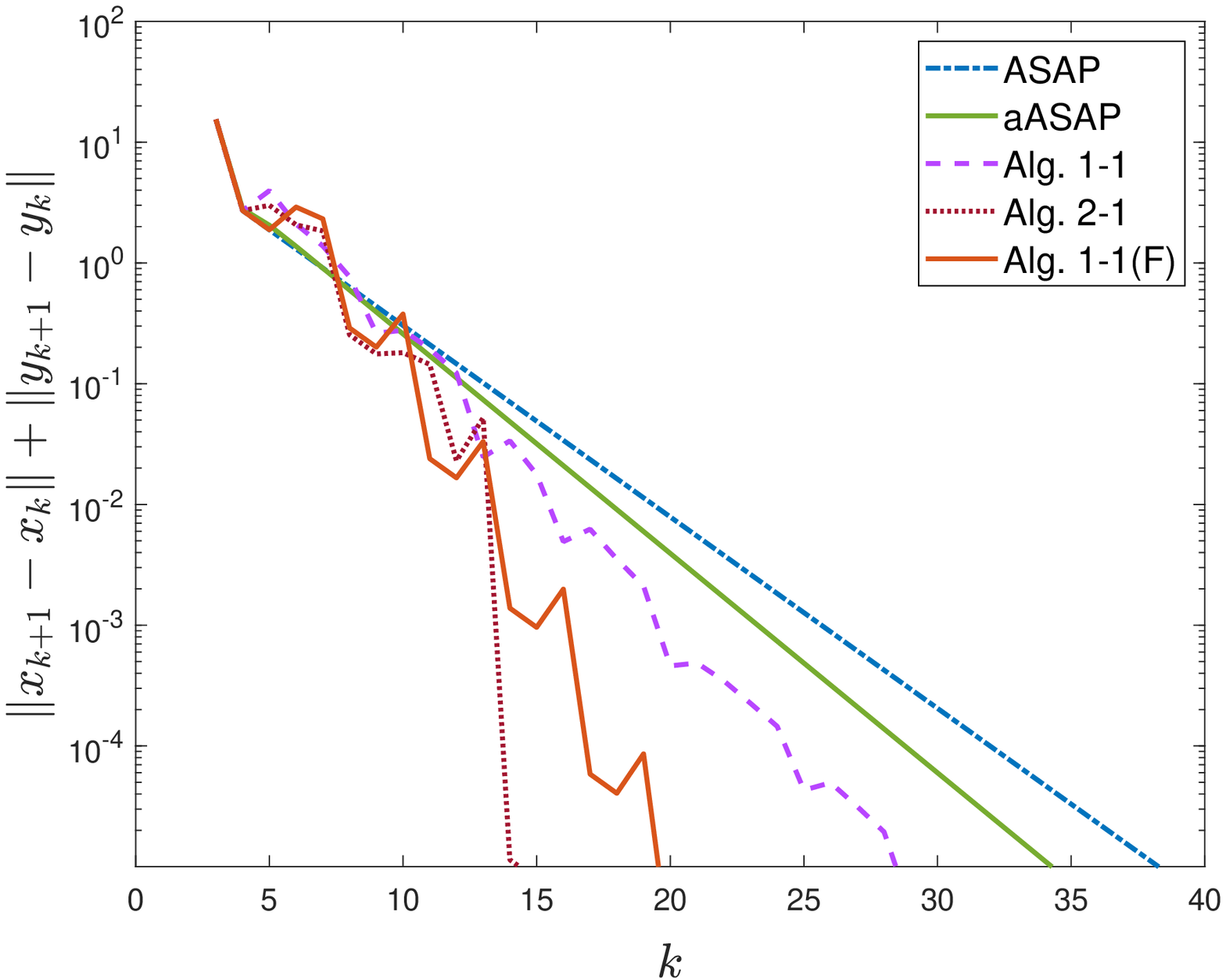}
    \label{fig_first_case}}
\hfil
\subfloat[squared Euclidean distance]{\includegraphics[width=3.0in]{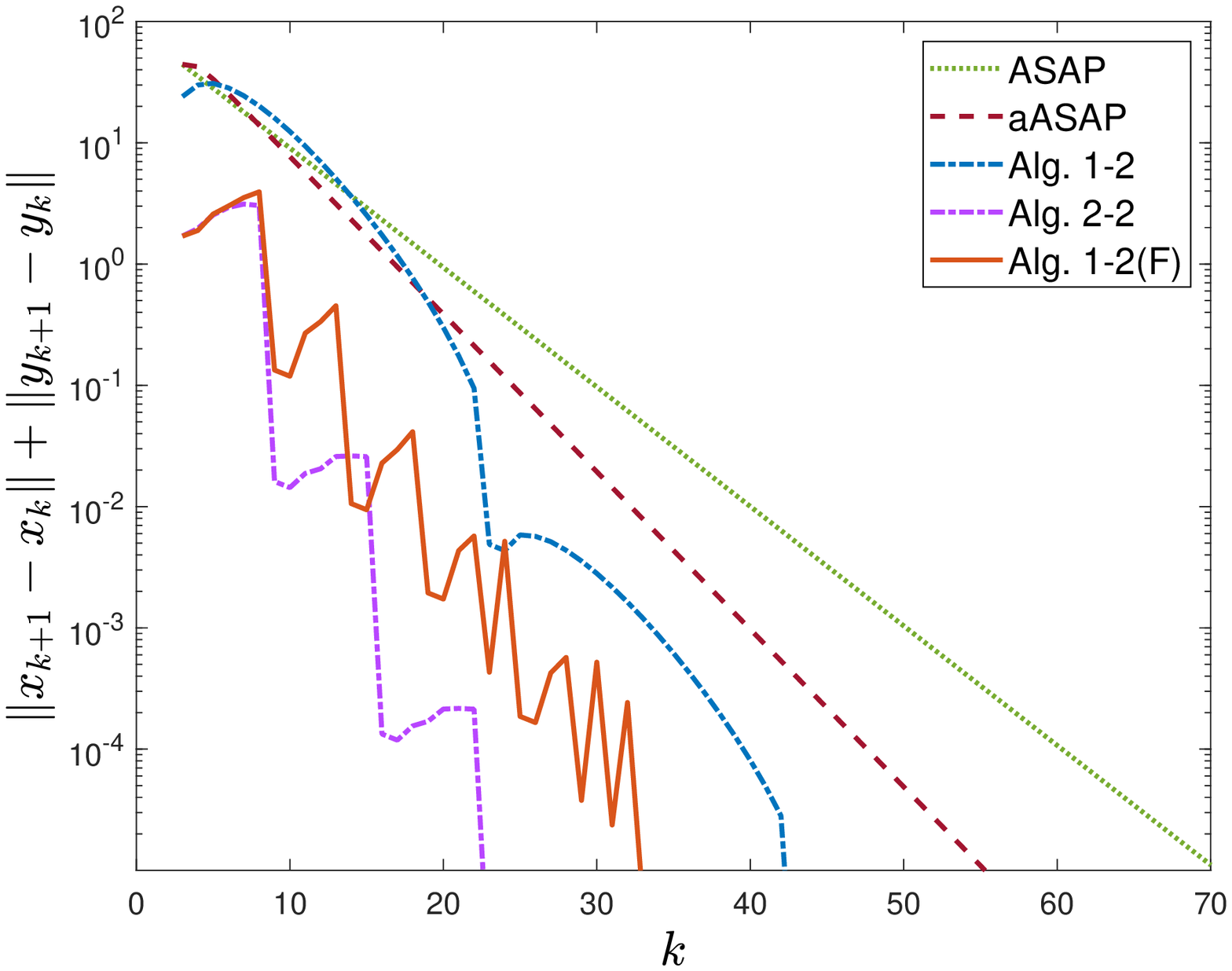}%
    \label{fig_second_case}}
\hfil
\subfloat[Itakura-Saito and squared Euclidean distance]{\includegraphics[width=3.0in]{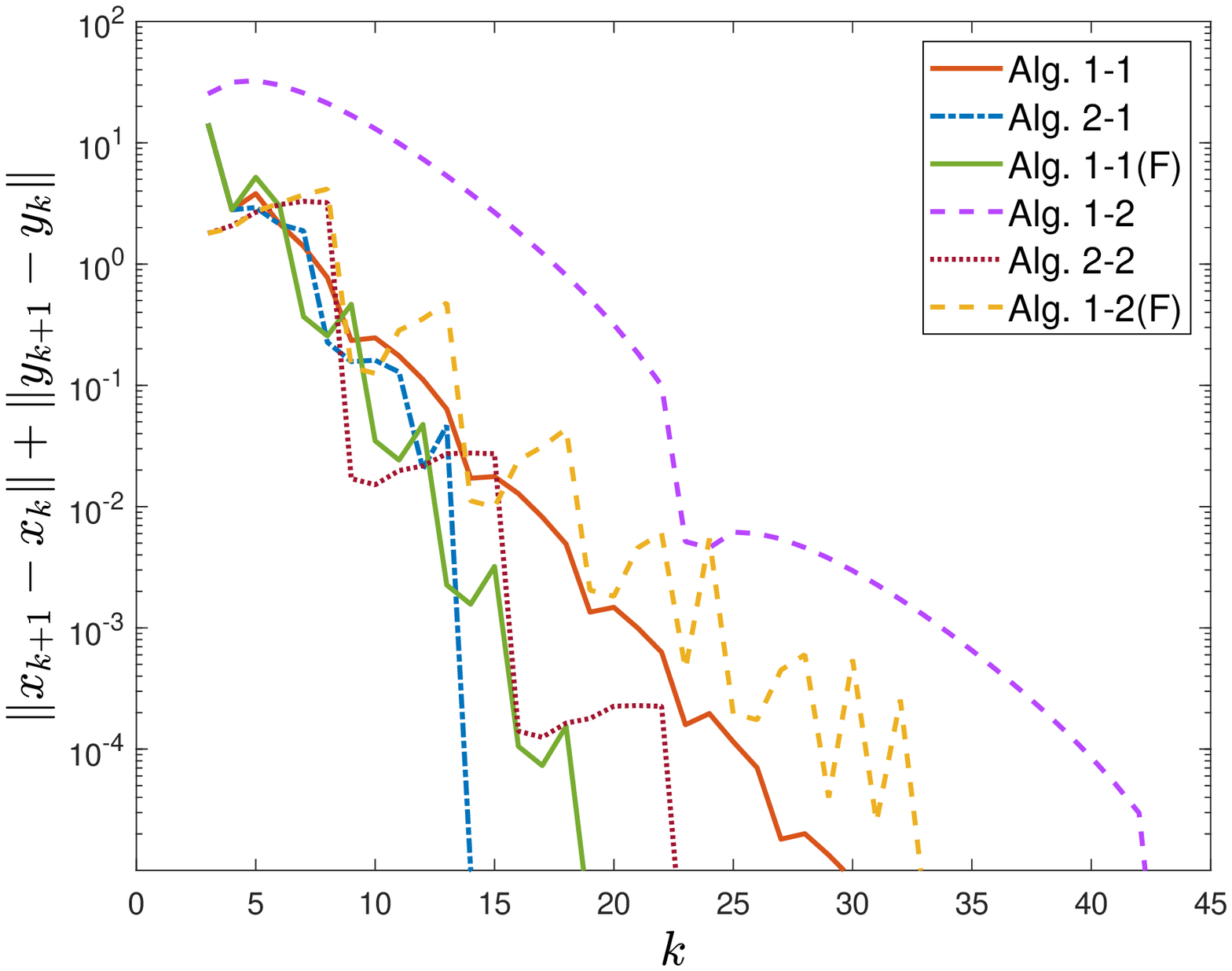}%
    \label{fig_thrid_case}}
    \caption{The value of $\|x_{k+1}-x_k\|+\|y_{k+1}-y_k\|$ versus the iteration numbers for Algorithm 1 with different extrapolation parameter.}
    \label{fig_sim2}
\end{figure*}

\begin{table}[!ht]
\centering
\caption{Numerical results of different Bregman distance with different extrapolation parameter}\label{table3}
\begin{tabular}{lllllllll}
\hline
           \multicolumn{4}{c}{Itakura-Saito distance} &  & \multicolumn{4}{c}{squared Euclidean distance} \\ \cline{1-4} \cline{6-9}
Algorithm & Iter.  & Time(s)  & Extra. &  &Algorithm      & Iter.  & Time(s)  & Extra. \\ \hline
ASAP$\sim$BB       & 15    &4.3476         & \multicolumn{1}{c}{14}    &  &ASAP$\sim$BB& 25     &6.4702         &\multicolumn{1}{c}{24}    \\
aASAP$\sim$BB     & 14     &4.0324         & \multicolumn{1}{c}{13}    &  &aASAP$\sim$BB& 21     &6.0604         &\multicolumn{1}{c}{20}    \\
Alg. 1-1$\sim$BB   & 13     &3.9198         & \multicolumn{1}{c}{7}    &  &Alg. 1-2$\sim$BB& 19     &5.3523          &\multicolumn{1}{c}{10}    \\
Alg. 2-1$\sim$BB   & 9     &2.9745         & \multicolumn{1}{c}{5}     &  &Alg. 2-2$\sim$BB& 15     &4.1270          &\multicolumn{1}{c}{8}    \\
Alg. 1-1(F)$\sim$BB & 11&3.5435& \multicolumn{1}{c}{6}     &  &            Alg. 1-2(F)$\sim$BB&17     &4.9352          &\multicolumn{1}{c}{9}    \\ \hline
\end{tabular}
\end{table}

\begin{figure*}[!t]
    \centering
    \subfloat[Itakura-Saito distance]{\includegraphics[width=3.0in]{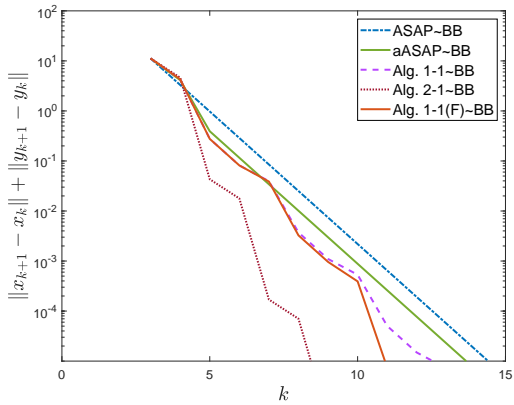}
    \label{fig_first_case}}
\hfil
\subfloat[squared Euclidean distance]{\includegraphics[width=3.0in]{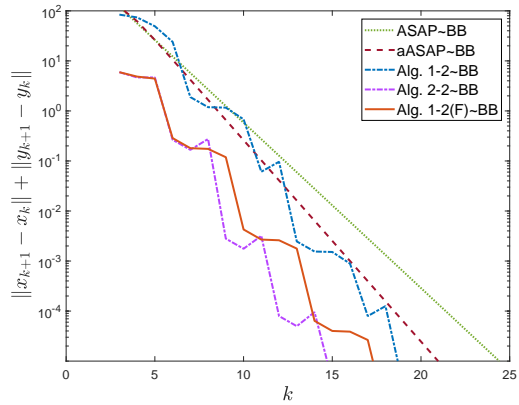}%
    \label{fig_second_case}}
\hfil
\subfloat[Itakura-Saito and squared Euclidean distance]{\includegraphics[width=3.0in]{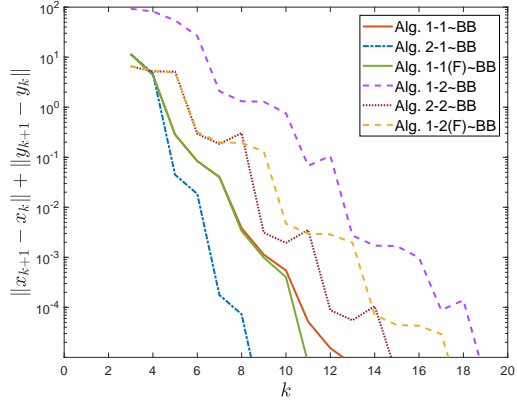}%
    \label{fig_thrid_case}}
    \caption{The value of $\|x_{k+1}-x_k\|+\|y_{k+1}-y_k\|$ versus the iteration numbers for Algorithm 1 with different extrapolation parameter.}
    \label{fig_sim3}
\end{figure*}
\section{Conclusion}
{In this paper, we introduce a two-step inertial Bregman alternating structure-adapted proximal gradient descent algorithm for solve nonconvex and nonsmooth nonseparable optimization problem.
Under some assumptions, we proved that our algorithm is a descent method in sense of objective function values, and every cluster point is a critical point of the objective function. The convergence of proposed algorithms are proved if the objective function satisfies the Kurdyka--{\L}ojasiewicz property. 
Furthermore, if the desingularizing function has the special form, we also established the linear and sub-linear convergence rates of the function value sequence generated by the algorithm. In addition, we also proposed a backtracking strategy with BB method to make
our algorithms more flexible when the Lipschitz constant is unknown or difficult to compute. In numerical experiments, 
we apply different Bregman distance to solve nonconvex quadratic programming and sparse logistic regression problem. Numerical results are reported to show the effectiveness of the proposed algorithm.}


\begin{thebibliography}{99}
\bibitem{NNZC} Nikolova M., Ng M.K., Zhang S.Q., Ching W.K., Eﬃcient reconstruction of piecewise constant images using nonsmooth nonconvex minimization, SIAM J. Imaging Sci., 2008, 1(1), 2-25.
 
\bibitem{GZZF} Gu S.H., Zhang L., Zuo W.M., Feng X.C., Weighted nuclear norm minimization with application to image denoising, In: Proceedings of the IEEE Conference on Computer Vision and Pattern Recognition (CVPR), 2014, 2862-2869.
 
\bibitem{B} Bian W., Chen X.J., Linearly constrained non-Lipschitz optimization for image restoration, SIAM J. Imaging Sci., 2015, 8(4), 2294-2322.

\bibitem{PT}  Paatero, P., Tapper, U., Positive matrix factorization: a nonnegative factor model with optimal utilization of error estimates of data values, Environmetrics 5, 1994, 111-126.

\bibitem{LS}  Lee, D.D., Seung, H.S., Learning the parts of objects by nonnegative matrix factorization, Nature 401, 1999, 788-791.

\bibitem{BCV} Bot R.I., Csetnek E.R., Vuong P.T., The forward-backward-forward method from continuous and discrete perspective for pseudo-monotone variational inequalities in hilbert spaces, Eur. J. Oper. Res., 2020, 49-60.

\bibitem{XX} Xu L.L., Xin Y., Vuong P.T., Some accelerated alternating proximal gradient algorithms for a class of nonconvex nonsmooth problems, J. Glob. Optim., 2022, https://doi.org/10.1007/s10898-022-01214-3.

\bibitem{ABSS} Attouch H., Bolte J., Svaiter B.F., Convergence of descent methods for semi-algebraic and tame problems: proximal algorithms, forward-backward splitting, and regularized Guass-Seidel methods, Math. Program., 2013, 137, 91-129.
 
\bibitem{DC} Donoho D.L., Compressed sensing, IEEE Trans. Inform. Theory, 2006, 4, 1289-1306.
 
 \bibitem{BN} Bertsekas D.P., Nonlinear programming, J. Oper. Res. Soc., 1977, 48, 334-334. 

\bibitem{BTL} Beck A., Tetruashvili L., On the convergence of block coordinate descent type methods, SIAM J. Optim., 2013, 23, 2037-2060.

\bibitem{A}  Auslender A., Asymptotic properties of the Fenchel dual functional and applications to decomposition problems, J. Optim. Theory Appl., 1992, 73(3), 427-449.

\bibitem{ABR} Attouch H., Bolte J., Redont P., Soubeyran A., Proximal alternating minimization and projection methods for nonconvex problems: an approach based on the Kurdyka--{\L}ojasiewicz inequality, Math. Oper. Res., 2010, 35, 438-457.

\bibitem{ARS} Attouch, H., Redont, P., Soubeyran, A., A new class of alternating proximal minimization algorithms with costs-to-move, SIAM J. Optim. 2007, 18, 1061-1081.

\bibitem{XY} Xu, Y., Yin, W., A block coordinate descent method for regularized multiconvex optimization with applications to nonnegative tensor factorization and completion, SIAM J. Imaging Sci, 2013, 6, 1758-1789.

\bibitem{BST} Bolte J., Sabach S., Teboulle M., Proximal alternating linearized minimization for nonconvex and nonsmooth problems, Math. Program., 2014, 146, 459-494.

\bibitem{NT} Nikolova M., Tan P., Alternating structure-adapted proximal gradient descent for nonconvex block-regularised problems, SIAM J. Optim., 2019, 29(3), 2053-2078. 

\bibitem{OCB} Ochs P., Chen Y., Brox T., and Pock T., iPiano: Inertial proximal algorithm for nonconvex optimization, SIAM J. Imaging Sci., 2014, 7(2), 1388-1419.

 \bibitem{BC} Bo\c{t} R.I., Csetnek E.R., An inertial Tseng's type proximal algorithm for nonsmooth and nonconvex optimization problems,  J. Optim. Theory Appl., 2016, 171(2), 600-616.

\bibitem{PB} Polyak B.T., Some methods of speeding up the convergence of iteration methods, USSR Comput. Math. Math. Phys., 1964, 4, 1-17.

\bibitem{AA} Alvarez, F., Attouch, H., An inertial proximal method for maximal monotone operators via discretization of a nonlinear oscillator with damping. Set-Valued Anal. 2001, 9(1), 3-11.

\bibitem{PS} Pock T., Sabach S., Inertial proximal alternating linearized minimization (iPALM) for nonconvex and nonsmooth problems, SIAM J. Imaging Sci., 2017, 9, 1756-1787.

\bibitem{GCH} Gao X., Cai X.J., Han D.R., A Gauss-Seidel type inertial proximal alternating linearized minimization for a class of nonconvex optimization problems,  J. Glob. Optim., 2020, 76, 863-887.

\bibitem{ZQ} Zhao J., Dong Q.L., Michael Th.R., Wang F.H., Two-step inertial Bregman alternating minimization algorithm for nonconvex and nonsmooth problems, J. Glob. Optim., 2022, 84, 941-966.

\bibitem{ZA} Chao M.T., Nong F.F., Zhao M.Y., An inertial alternating minimization with Bregman distance for a class of nonconvex and nonsmooth problems, J. Appl. Math. Comput., 2023, 69, 1559-1581.

\bibitem{MV} Mordukhovich B., Variational Analysis and Generalized Differentiation, I: Basic Theory. Grundlehren der Mathematischen Wissenschaften, Vol. 330. Springer-Verlag, Berlin, 2006.

\bibitem{RWA} Rockafellar R.T., Wets R., Variational analysis, Grundlehren der Mathematischen Wissenschaften, Vol. 317. Springer, Berlin, 1998.

\bibitem{RW} Rockafellar, R.T., Wets, J.B.: Variational Analysis. Springer, New York, 1998.
\bibitem{BT} Bertsekas D.P., Tsitsiklis J.N., Parallel and Distributed Computation: Numerical Methods, Prentice hall, Englewood Cliffs, NJ, 1989.
\bibitem{WCX} Wang F., Cao W., Xu Z., Convergence of multi-block Bregman ADMM for nonconvex composite problems, Sci. China Inf. Sci., 2018, 61, 122101.

\bibitem{LLA} Li, H., Lin, Z., Accelerated proximal gradient methods for nonconvex programming, In: Advances in Neural Information Processing Systems, pp,2015, 379-387.

\bibitem{ZZ} Li, Q., Zhou, Y., Liang, Y., Varshney, P.K., Convergence analysis of proximal gradient with momentum for nonconvex optimization. In: Proceedings of the 34th International Conference on Machine Learning, 2017,2111-2119.

\bibitem{BB}  Barzilai, J., Borwein, J.M., Two-point step size gradient methods, IMA J. Numer. Anal. 1998, 8, 141-148.




\bibitem{ABB} Attouch, H., Bolte, J.: On the convergence of the proximal algorithm for nonsmooth functions involving analytic features. Math. Program. Ser. B, 2009, 116, 5-16.

\bibitem{LL} Li, H., Lin, Z.: Accelerated proximal gradient methods for nonconvex programming. In: Advances in Neural Information Processing Systems, pp., 2015, 379-387.

\bibitem{LYV} Li, Q., Zhou, Y., Liang, Y., Varshney, P.K., Convergence analysis of proximal gradient with momentum for nonconvex optimization. In: Proceedings of the 34th International Conference on Machine Learning, 2017, 2111-2119.
\bibitem{Z} Zhang, T., Analysis of multi-stage convex relaxation for sparse regularization, J. Mach. Learn. Res. 2010, 11, 1081-1107.
\bibitem{GZL} Gong, P., Zhang, C., Lu, Z., Huang, J., Ye, J., A general iterative shrinkage and thresholding algorithm for nonconvex regularized optimization problems. In: International Conference on Leadership and Management, 2013, 37-45.
\end{thebibliography}
\end{document}